\definecolor{myred}{HTML}{FF3D3D}
\definecolor{mycyan}{HTML}{0474BE}
\numberwithin{equation}{section}
\newcommand{\R}{\mathbb{R}}
\newcommand{\Z}{\mathbb{Z}}
\newcommand{\sign}{\operatorname{sign}}
\newcommand{\Lone}{\rm{L}^1}
\newcommand{\TV}{\operatorname{TV}}
\newcommand{\BV}{\mathrm{BV}}
\newcommand{\Dx}{{\Delta x}}
\newcommand{\Dt}{{\Delta t}}
\newcommand{\hf}{{\unitfrac{1}{2}}}
\newcommand*\diff{\mathop{}\!\mathrm{d}}
\pgfplotsset{width=.37\textwidth,compat=1.12}
\definecolor{skyblue1}{rgb}{0.447,0.624,0.812}
\definecolor{plum1}{rgb}{0.678,0.498,0.659}
\definecolor{scarletred1}{rgb}{0.937,0.161,0.161}
\definecolor{myblue}{HTML}{1e77b4}
\definecolor{myorange}{HTML}{ff7f0f}
\definecolor{mycolor}{rgb}{0.122, 0.435, 0.698}
\newtheorem{theorem}{Theorem}[section]
\newtheorem{lemma}[theorem]{Lemma}
\newtheorem{definition}[theorem]{Definition}
\newtheorem{proposition}[theorem]{Proposition}
\newtheorem{remark}[theorem]{Remark}
\newtheorem*{maintheorem}{Main theorem}
\newtheorem{corollary}[theorem]{Corollary}
\title{Flux-stability for conservation laws with discontinuous flux and convergence rates of the front tracking method}
\author{\textsc{Adrian\,M. Ruf}\thanks{Seminar for Applied Mathematics, ETH Z\"urich, Switzerland (\texttt{adrian.ruf@sam.math.ethz.ch})}}
\date{\today}
\begin{document}

\maketitle

\begin{abstract}
	We prove that adapted entropy solutions of scalar conservation laws with discontinuous flux are stable with respect to changes in the flux under the assumption that the flux is strictly monotone in $u$ and the spatial dependency is piecewise constant with finitely many discontinuities. We use this stability result to prove a convergence rate for the front tracking method -- a numerical method which is widely used in the field of conservation laws with discontinuous flux. To the best of our knowledge, both of these results are the first of their kind in the literature on conservation laws with discontinuous flux.
	We also present numerical experiments verifying the convergence rate results and comparing numerical solutions computed with the front tracking method to finite volume approximations.
\end{abstract}

\paragraph{Key words.} hyperbolic conservation laws, discontinuous flux, stability, front tracking, convergence rate

\paragraph{AMS subject classification.} 35L65 , 35R05, 35B35, 65M12


\section{Introduction}
We consider scalar conservation laws with discontinuous flux of the form
\begin{gather}
	\begin{aligned}
		u_t + f(k(x),u)_x = 0,& &&(x,t)\in\R\times(0,T)\\
		u(x,0) = u_0(x),& &&x\in\R.
	\end{aligned}
\label{conservation law}
\end{gather}
Here, the flux $f$ is smooth in $k$ and $u$, but may have a \emph{discontinuous spatial dependency} through the coefficient $k$.

\emph{The aim of this paper is to show that entropy solutions of~\eqref{conservation law} are stable with respect to changes in $k$ and $f$}. We use this stability result to show that the front tracking method -- an important tool to show existence of solutions for~\eqref{conservation law} --  has a \emph{first order convergence rate}.
Both of these results constitute generalizations of the seminal theory developed by Lucier \cite{lucier1986moving} for conservation laws without spatial dependency in the flux.
\begin{maintheorem}
  Let $f\in\mathcal{C}^2$ be strictly monotone in $u$ in the sense that $f_u\geq \alpha>0$, $k$ piecewise constant with finitely many discontinuities, and $u_0\in(\mathrm{L}^1\cap\BV)(\R)$. Then entropy solutions of~\eqref{conservation law} are $\mathrm{L}^1$-stable with respect to changes in the flux, the discontinuous coefficient $k$, and the initial datum. Moreover, the front tracking algorithm has a first-order convergence rate.
\end{maintheorem}
The precise theorem is stated in \Cref{sec: main stability result,sec: Convergence rate front tracking}.
Our proof uses Kuznetsov-type lemmata on the subdomains between neighboring discontinuities of $k$ and a novel spatial total variation bound of the flux to derive stability estimates on each subdomain.

\subsection{Background on conservation laws with discontinuous flux}
Equations of type~\eqref{conservation law} arise in a number of areas of application including vehicle traffic flow in the presence of abruptly varying road conditions (see \cite{lighthill1955kinematic}), polymer flooding in oil recovery (see \cite{Shen:2017aa}), two-phase flow through heterogeneous porous media (see \cite{gimse1992solution,risebro1991front}), and sedimentation processes (see \cite{diehl1996conservation,burger2003front}).

When the spatial dependency is smooth, well-posedness of the conservation law~\eqref{conservation law} is well-known due to the seminal results of Kru\v{z}kov \cite{kruvzkov1970first}. Uniqueness follows from the so-called Kru\v{z}kov entropy inequality
\begin{equation}
	\partial_t |u - c| + \partial_x \left( \sign(u-c)(f(k(x),u) - f(k(x),c)) \right) + \sign(u-c)\partial_x f(k(x),c) \leq 0
	\label{eqn: Kruzkov entropy condition}
\end{equation}
which is to be satisfied in the distributional sense and for all $c\in\R$. However, when spatial flux discontinuities are present,~\eqref{eqn: Kruzkov entropy condition} no longer makes sense due to the term $\sign(u-c)\partial_x f(k(x),c)$. This difficulty is usually overcome by requiring that~\eqref{eqn: Kruzkov entropy condition} holds away from the spatial flux discontinuities, and imposing suitable jump conditions along the spatial interfaces (in addition to the Rankine--Hugoniot condition) \cite{gimse1991riemann,gimse1992solution,diehl1996conservation,klingenberg1995convex,klausen1999stability,Towers1,Towers2,karslen2003l1,karlsen2004convergence,adimurthi2005optimal,andreianov2011theory}.

In the last two decades many different selection criteria to that effect were proposed. Most theories are restricted to fluxes with separated variables, i.e., fluxes of the type $f(k(x),u)=k(x)f(u)$ \cite{klingenberg1995convex,Towers1,Towers2,wen2008convergence}, where $k$ has finitely many discontinuities, usually only one.

In \cite{gimse1991riemann,gimse1992solution}, Gimse and Risebro used the minimal jump entropy condition at the flux interfaces to obtain uniqueness for the Riemann problem. Diehl imposed a different interface condition to show uniqueness for the Riemann problem which he termed $\Gamma$ condition \cite{diehl1996conservation}. In \cite{klingenberg1995convex}, Klingenberg and Risebro showed that the minimal jump entropy condition implies the wave entropy condition which is sufficient to show uniqueness for the Cauchy problem.
These results were later extended to much more general fluxes by Panov~\cite{panov2010existence}.
Additionally, Aae Klausen and Risebro showed that wave entropy solutions are not only unique, but $\mathrm{L}^1$-stable with respect to changes in the discontinuous coefficient $k$ and the initial datum \cite{klausen1999stability}.
In~\cite{Towers1}, Towers established uniqueness results using certain geometric entropy conditions at the flux interfaces assuming that the solution is piecewise $\mathcal{C}^1$.
Karlsen, Risebro, and Towers employed the crossing condition at the interfaces to obtain uniqueness for~\eqref{conservation law} (including a degenerate parabolic term) \cite{karslen2003l1}.
Adimurthi, Mishra and Veerappa Gowda \cite{adimurthi2005optimal} showed that~\eqref{conservation law} (with just a single discontinuity in $k$) admits many $\mathrm{L}^1$-contractive semigroups of entropy solutions, one for each so-called connection $(A,B)$ (see also~\cite{Sid2005,ADIMURTHI2007310} and references therein).

A different uniqueness theory based on so-called adapted entropies which does not require any additional interface entropy conditions was introduced by Baiti and Jenssen~\cite{BAITI1997161} and further developed by Audusse and Perthame~\cite{audusse2005uniqueness} for fluxes satisfying $f_u\geq \alpha>0$ which is also the setting of the present paper. In the adapted entropy framework the usual Kru\v{z}kov entropies $(|u-c|)_{c\in\R}$ are replaced by the adapted entropies $(|u-c_p(x)|)_{p\in\R}$ where $c_p(x)$ is the unique solution of
\begin{equation*}
  f(k(x),c_p(x)) = p\qquad\text{for all }x \in\R.
\end{equation*}
If $c_p(x)$ takes the place of the constant $c$ in the Kru\v{z}kov entropy condition~\eqref{eqn: Kruzkov entropy condition}, the problematic term $\sign(u-c_p(x))\partial_x f(k(x),c_p(x))$ vanishes (cf. \Cref{def: entropy solution} below). This allows the definition of entropy solutions without imposing additional interface conditions and, in particular, without requiring the existence of traces. In fact, instead of assuming that the flux in~\eqref{conservation law} has the form $f(k(x),u)$ for $f\in\mathcal{C}^2$ \cite{BAITI1997161} the adapted entropy theory even allows for more general fluxes of the form $f(x,u)$ where $f$ is only Lipschitz in $u$ and may have infinitely many spatial discontinuities~\cite{audusse2005uniqueness}. Existence of adapted entropy solutions for general fluxes of this form was shown by Piccoli and Tournus~\cite{Piccoli/Tournus} assuming that $f$ is concave in $u$. Towers recently extended this existence result to non-concave fluxes~\cite{TOWERS20205754}.
Furthermore, we want to mention two other recent works regarding adapted entropy solutions of~\eqref{conservation law} concerning a multilevel Monte Carlo framework where the model parameters $(k,f,u_0)$ are subject to uncertainty~\cite{badwaik2019multilevel} and concerning Bayesian inverse problems for conservation laws with discontinuous flux~\cite{MishraBayesian}.

The work~\cite{andreianov2011theory} by Andreianov, Karlsen, and Risebro contains a thorough review of selection criteria available in the literature on conservation laws with discontinuous flux for the so-called `two flux' case -- given by
\begin{equation*}
  u_t + (H(x)f(u) + (1-H(x))g(u))_x = 0
\end{equation*}
where $H$ is the Heaviside function and $f$ and $g$ are Lipschitz continuous. Each existing selection criterion that leads to an $\mathrm{L}^1$-contractivity property is associated to a germ which underlies these conditions \cite{andreianov2011theory}.

\subsection{Background on numerical methods for conservation laws with discontinuous flux}
Due to the various interface conditions that are used to obtain uniqueness for~\eqref{conservation law}, the front tracking method which explicitly deals with the interfaces is a widely used tool to show existence of solutions \cite{gimse1991riemann,gimse1992solution,gimse1993conservation,klingenberg1995convex,klingenberg2001stability,burger2003front,coclite2005conservation,holden2015front}. Also in the adapted entropy framework the front tracking method has been employed successfully to prove existence~\cite{BAITI1997161,Piccoli/Tournus}. However, to the best of our knowledge, no convergence rate results for the front tracking method are currently available in the literature.

In the absence of a spatial dependency of the flux, Lucier~\cite{lucier1986moving} showed that the front tracking method has a first-order convergence rate in $\mathrm{L}^1$. To that end, Lucier first proved that entropy solutions are Lipschitz-stable with respect to changes in the flux. Since numerical solutions computed with the front tracking method are entropy solutions of an approximate conservation law whose flux is a piecewise linear approximation, the convergence rate follows from the flux stability result. We will employ the same general strategy in the present paper between neighboring discontinuities of $k$. Convergence rates of the front tracking method for conservation laws without spatial dependency measured in the $p$-Wasserstein distances have been established by Solem~\cite{solem2018convergence}.

Other than the front tracking method, many authors have proposed and analyzed various finite volume methods \cite{Towers1,Towers2,karlsen2002upwind,karlsen2004convergence,adimurthi2005optimal,Sid2005,ADIMURTHI2007310,wen2008convergence,burger2009engquist,karlsen2017convergence}. In particular, some recent contributions also design finite volume methods in the framework of adapted entropy solutions \cite{BadwaikRufconvergence,TOWERS20205754,ghoshal2020convergence,badwaik2019multilevel}.

Again, in the absence of a spatial dependency of the flux, convergence rates of monotone finite volume methods are well-understood in both $\mathrm{L}^1$~\cite{kuznetsov76,sabac} and in the Wasserstein distance~\cite{nessyconv92,nessyconv,Ruf2019}.
Results regarding convergence rates for finite volume methods for~\eqref{conservation law}, however, are sparse. For the linear advection equation where the wave speed has a single discontinuity, Jin and Wen~\cite{wen2008convergence} proved a convergence rate of $\hf$ for the upwind scheme. For general nonlinear fluxes with $k$ piecewise constant, a recent contribution by Badwaik and the author \cite{BadwaikRufconvergence} showed an optimal convergence rate of $\hf$ for upwind-type finite volume methods
which include the Godunov and Engquist--Osher schemes.

\subsection{Outline of the paper}
We have organized the paper in the following way. In \Cref{sec: Preliminaries}, we will define entropy solutions of~\eqref{conservation law} in the sense of~\cite{BAITI1997161,audusse2005uniqueness} and lay out the main proof strategy of decomposing the problem into finitely many initial(-boundary) value problems on subdomains between two neighboring discontinuities of $k$. \Cref{sec: Preliminaries} also contains a spatial total variation estimate of the flux function which we will leverage in the proof of the main theorem. In \Cref{sec: Stability with one discontinuity}, we consider the case of a single discontinuity in $k$ located at zero and prove stability in $f$ on $\R^-$, $\R^+$, and on the bounded interval $(0,L)$. Those three scenarios will be the cornerstones in showing flux-stability on each subdomain in the general case of arbitrarily located flux discontinuities. \Cref{sec: main stability result} contains the statement and proof of the main flux-stability result where we use the translation invariance of~\eqref{conservation law} to first show stability in the flux $f$ and then, more generally, stability in $f$ and $k$. In \Cref{sec: Convergence rate front tracking} we introduce the front tracking method and apply the flux-stability result to derive a convergence rate. \Cref{sec: numerical experiments} is devoted to numerical experiments verifying our convergence rate result. In addition to that, we compare numerical solutions computed with the front tracking method to finite volume approximations. We summarize the findings of this paper in \Cref{sec: conclusion}.


\section{Preliminaries}\label{sec: Preliminaries}



Throughout this paper, we will assume that the initial datum $u_0$ is integrable, bounded, and of finite total variation, i.e., $u_0 \in(\mathrm{L}^1\cap\BV)(\R)$, and that $f$ is strictly monotone in $u$, i.e., $f_u\geq \alpha>0$, and satisfies $f(k^*,0)=0$ for all $k^*\in\R$. Further, we will denote the discontinuities of $k$ as $\xi_1,\ldots,\xi_N$ and the interval between two adjacent discontinuities as $D_i = (\xi_i,\xi_{i+1})$, $i=0,\ldots, N$. Here, we used the notation $\xi_0=-\infty$ and $\xi_{N+1}=\infty$. Then we can write
\begin{equation*}
  f(k(x),\cdot) =: f^{(i)}(\cdot)\qquad \text{for }x\in D_i.
\end{equation*}
We will consider adapted entropy solutions of~\eqref{conservation law} in the sense of~\cite{BAITI1997161,audusse2005uniqueness} which in the case of piecewise constant $k$ reads as follows.
\begin{definition}[Entropy solution]\label{def: entropy solution}
  We say $u\in\mathcal{C}([0,T];\mathrm{L}^1(\R))\cap\mathrm{L}^\infty((0,T)\times\R)$ is an entropy solution of~\eqref{conservation law} if for all $c\in\R$
  \begin{multline*}
    \sum_{i=0}^N \bigg( \int_0^T \int_{D_i} \left(|u-c_i|\varphi_t + \sign(u-c_i)(f^{(i)}(u)-f^{(i)}(c_i))\varphi_x \right) \diff x\diff t \\
    -\int_{D_i}|u(x,T)-c_i|\varphi(x,T)\diff x +\int_{D_i}|u_0(x)-c_i|\varphi(x,0))\diff x\\
    - \int_0^T \sign(u(\xi_{i+1}-,t)-c_i)(f^{(i)}(u(\xi_{i+1}-,t))-f^{(i)}(c_i))\varphi(\xi_{i+1},t)\diff t \\
    + \int_0^T \sign(u(\xi_i +,t)-c_i)(f^{(i)}(u(\xi_i +,t))-f^{(i)}(c_i))\varphi(\xi_i,t)\diff t \bigg) \geq 0.
  \end{multline*}
  for all nonnegative $\varphi\in\mathcal{C}^\infty_c(\R\times[0,T])$. Here, the $c_i$ are given by $c_0 \coloneqq c$ and
  \begin{equation}
    c_{i+1}=(f^{(i+1)})^{-1}(f^{(i)}(c_i)) \qquad \text{for } i=1,\ldots,N.
    \label{definition c}
  \end{equation}
\end{definition}

\begin{remark}
  Note that, due to the monotonicity of the fluxes $f^{(i)}$, the inverse of $f^{(i)}$ used in~\eqref{definition c} and throughout this paper exists. Moreover, the traces in \Cref{def: entropy solution} are well defined (cf. \cite[Remark 2.3]{andreianov2011theory}).
\end{remark}

The following theorem assures existence and uniqueness of entropy solutions in the present setting.
\begin{theorem}[Existence and uniqueness of entropy solutions \cite{BAITI1997161,TOWERS20205754,BadwaikRufconvergence}]\label{thm: existence and uniqueness of entropy solutions}
  Let $u_0\in(\mathrm{L}^1\cap\BV)(\R)$ and assume that $f$ satisfies $f_u\geq\alpha>0$ and that $k$ is piecewise constant with finitely many discontinuities. Then there exists a unique entropy solution $u$ of~\eqref{conservation law} which satisfies
  \begin{align}
    \|u(\cdot,t)\|_{\mathrm{L}^\infty(\R)} &\leq C \|u_0\|_{\mathrm{L}^\infty(\R)},
    \label{eqn: L infty bound of adapted entropy solutions}\\
    \TV(u(\cdot,t)) &\leq C(\TV(k)+\TV(u_0)),
    \label{eqn: TV bound of adapted entropy solutions}
    \shortintertext{ for all $0\leq t\leq T$ and}
    \TV_{[0,T]} (u(x,\cdot)) &\leq C \TV(u_0)
    \label{eqn: temporal TV bound of adapted entropy solutions}
  \end{align}
  for all $x\in\R$.
\end{theorem}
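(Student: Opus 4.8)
The plan is to obtain existence by an approximation argument and to obtain uniqueness separately by an adapted-entropy doubling of variables, treating the two halves of the statement independently. For existence I would construct approximate solutions $u_\Dx$ by a monotone finite volume scheme adapted to the discontinuous flux: since $f_u\ge\alpha>0$, each $f^{(i)}$ is strictly increasing, so pure upwinding is the natural (and monotone) choice in the interior of every $D_i$, while at an interface $\xi_{i+1}$ the numerical flux is chosen to enforce flux continuity $f^{(i)}(u)=f^{(i+1)}(u)$ across the jump (equivalently, the Godunov or Engquist--Osher flux built from the two monotone fluxes). I would then derive the three a priori bounds uniformly in $\Dx$, extract a convergent subsequence by Helly's theorem, and identify the limit as an entropy solution in the sense of \Cref{def: entropy solution} via a Lax--Wendroff-type argument.

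The three bounds are the technical core. The $\mathrm{L}^\infty$ bound follows from a discrete maximum principle: monotonicity of the scheme together with $f(k^*,0)=0$ keeps the cell averages in an interval determined by $u_0$, but because the admissible states change across an interface through the map $c\mapsto (f^{(i+1)})^{-1}(f^{(i)}(c))$ from \eqref{definition c}, the bound picks up a multiplicative constant $C$ controlled by the ratios $\sup f_u/\inf f_u$ over the finitely many intervals. For the spatial $\TV$ bound I would use a singular mapping: since the interface condition forces the flux variable $p=f(k(x),u)$ to be continuous across each $\xi_{i+1}$, measuring variation in $p$ removes the stationary interface jumps, the scheme is total-variation diminishing in this variable inside each $D_i$, and transferring back to $u$ produces exactly one controlled jump per discontinuity of $k$, which sums to the $\TV(k)$ contribution. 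The temporal bound $\TV_{[0,T]}(u(x,\cdot))\le C\,\TV(u_0)$ is then read off the equation: at fixed $x$ the time variation is governed by $u_t=-f(k(x),u)_x$, and a standard discrete estimate bounds $\sum_n|u_j^{n+1}-u_j^n|$ by $\TV(u_0)$ alone -- the stationary interface jumps do not move and hence contribute nothing in time, which is why $\TV(k)$ is absent here.

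For uniqueness I would follow Audusse--Perthame and double the variables against the adapted entropies $|u-c_i|$ rather than the Kru\v{z}kov entropies. The decisive point is that with $c_{i+1}=(f^{(i+1)})^{-1}(f^{(i)}(c_i))$ the adapted entropy flux $\sign(u-c_i)(f^{(i)}(u)-f^{(i)}(c_i))$ has matching one-sided limits across each interface, so the boundary terms at $\xi_{i+1}$ produced by two entropy solutions cancel in the doubled inequality and no additional interface condition is needed; the Kru\v{z}kov machinery then yields the $\mathrm{L}^1$-contraction $\|u(\cdot,t)-v(\cdot,t)\|_{\mathrm{L}^1}\le\|u_0-v_0\|_{\mathrm{L}^1}$, and in particular uniqueness.

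I expect the singular-mapping step in the spatial $\TV$ bound to be the main obstacle. Inside each $D_i$ the estimates are classical, but the interaction of the scheme with the $k$-discontinuities is delicate: one must verify that the transformed variable is genuinely TVD across the interface under the chosen interface flux and that converting back to $u$ introduces only a bounded jump proportional to the local jump in $k$, uniformly in $\Dx$. The companion difficulty is making the doubling of variables rigorous at the interfaces in the uniqueness proof, where the cancellation of the interface terms relies on the well-definedness of the traces guaranteed by the remark following \Cref{def: entropy solution}.
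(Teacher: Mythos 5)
Your proposal is sound in outline, but note first that the paper does not actually prove this theorem: it is a collection of known results, with existence and uniqueness attributed to the front-tracking theory of Baiti and Jenssen \cite{BAITI1997161}, the bounds \eqref{eqn: L infty bound of adapted entropy solutions}--\eqref{eqn: TV bound of adapted entropy solutions} to \cite[Thm.~1.4]{TOWERS20205754}, and the temporal bound \eqref{eqn: temporal TV bound of adapted entropy solutions} to \cite[Lem.~4.6]{BadwaikRufconvergence}. Measured against that underlying literature, your route differs from the paper's primary citation (piecewise-linear flux approximation and wave front tracking in \cite{BAITI1997161}) but closely parallels the other two: a monotone upwind/Godunov scheme whose interface flux enforces continuity of $f(k(x),u)$, compactness via the singular mapping $p=f(k(x),u)$, and a Lax--Wendroff identification of the limit is precisely the finite-volume mechanism of \cite{TOWERS20205754,BadwaikRufconvergence}, while your uniqueness argument is the adapted-entropy doubling of \cite{audusse2005uniqueness,BAITI1997161}. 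What front tracking buys is an existence proof with no CFL restriction that produces exactly the approximations whose convergence rate the paper later analyzes; what your finite-volume route buys is that all three quantitative bounds fall out of the same discrete construction used for compactness. Your observation that the interface terms in \Cref{def: entropy solution} cancel under the choice \eqref{definition c}, via the Rankine--Hugoniot condition and the monotonicity of the $f^{(i)}$, is correct and is exactly why the per-domain inequalities sum to a global adapted entropy inequality.

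One piece of your reasoning would fail if taken literally: the justification of \eqref{eqn: temporal TV bound of adapted entropy solutions}. It is not true that the $k$-discontinuities ``contribute nothing in time'' because the interface jumps are stationary. Even for initial data constant equal to $\bar u$ near $\xi_i$, the jump in $k$ launches a genuine moving wave into $D_i$ -- a shock or rarefaction separating $(f^{(i)})^{-1}(f^{(i-1)}(\bar u))$ from $\bar u$ -- and this wave crosses every vertical line $\{x\}\times[0,T]$ with $x>\xi_i$, contributing temporal variation there. The reason $\TV(k)$ is nevertheless absent from \eqref{eqn: temporal TV bound of adapted entropy solutions} is quantitative, not qualitative: since $f\in\mathcal{C}^2$ and $f(k^*,0)=0$, one has $|f^{(i)}(u)-f^{(i-1)}(u)|\leq C\,|k_i-k_{i-1}|\,|u|$, so each interface-generated wave has strength at most $C\|u_0\|_{\mathrm{L}^\infty(\R)}$, and for $u_0\in(\mathrm{L}^1\cap\BV)(\R)$ one has $\|u_0\|_{\mathrm{L}^\infty(\R)}\leq\TV(u_0)$ because $u_0$ vanishes at infinity; hence these contributions are again of size $C\,\TV(u_0)$ with $C$ depending on $k$ and $f$, which is what the bound asserts. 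Relatedly, the ``standard discrete estimate'' you invoke is not the standard one: the classical monotone-scheme estimate controls $\sum_j|u_j^{n+1}-u_j^n|$ (a sum over space at fixed time), whereas here you need $\sum_n|u_j^{n+1}-u_j^n|$ at a fixed cell, and the naive bound by spatial flux differences summed over all time steps diverges like $T/\Dx$. Closing this is the nontrivial content of \cite[Lem.~4.6]{BadwaikRufconvergence}, so in a self-contained proof you must establish it rather than quote it as routine.
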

The existence and uniqueness statement follows from the theory developed by Baiti and Jenssen.
The $\mathrm{L}^\infty$ and $\TV$ bounds in \Cref{thm: existence and uniqueness of entropy solutions} follow from \cite[Thm. 1.4]{TOWERS20205754} and the temporal $\TV$ bound from~\cite[Lem. 4.6]{BadwaikRufconvergence}.


\begin{remark}
  Like for conservation laws without (discontinuous) spatial dependency of the flux, a Rankine--Hugoniot-type argument shows that weak solutions of~\eqref{conservation law} necessarily satisfy the Rankine--Hugoniot condition across all discontinuities~$\xi_i$, i.e.,
  \begin{equation}
    f^{(i-1)}(u(\xi_i-,t)) = f^{(i)}(u(\xi_i+,t))
    \label{continuous Rankine--Hugoniot condition}
  \end{equation}
  holds for almost every $t\in(0,T)$.
\end{remark}

The following observation is at the heart of the proof of the main result. The entropy solution $u$ of~\eqref{conservation law} can be decomposed into a series of entropy solutions of an initial value, respectively initial-boundary value problem on $D_i$.
Specifically, we write $u$ as $u=\sum_{i=0}^N u^{(i)}$ where $u^{(i)}\coloneqq u \mathbbm{1}_{D_i\times [0,T]}$. Then $u^{(0)}$ solves
\begin{gather}
\begin{aligned}
  u^{(0)}_t + f^{(0)}(u^{(0)})_x = 0,& &&(x,t)\in D_0\times(0,T),\\
  u^{(0)}(x,0) = u_0(x),& &&x\in D_0
\end{aligned}
\label{conservation law on D0}
\end{gather}
and $u^{(i)}$ solves
\begin{gather}
\begin{aligned}
  u^{(i)}_t + f^{(i)}(u^{(i)})_x = 0,& &&(x,t)\in D_i\times(0,T),\\
  u^{(i)}(x,0) = u_0(x),& &&x\in D_i,\\
  u^{(i)}(\xi_i +,t) = (f^{(i)})^{-1}\left(  f^{(i-1)}(u^{(i-1)}(\xi_i -,t)) \right),& &&t\in(0,T)
\end{aligned}
\label{conservation law on Di}
\end{gather}
for $i=1,\ldots,N$ (cf. Definitions~\ref{def: entropy solution on R-} and~\ref{def: entropy solution on R+} below).
Note that the boundary condition on the domain $D_i$, $i=1,\ldots,N$, given by the last line of \eqref{conservation law on Di} reflects the Rankine--Hugoniot condition~\eqref{continuous Rankine--Hugoniot condition}.

Conversely, if $u^{(0)}$ is the entropy solution of~\eqref{conservation law on D0} on $D_0$ and $u^{(i)}$ is the entropy solution of~\eqref{conservation law on Di} on $D_i$ for $i=1,\ldots,N$, then the composite function $u\coloneqq \sum_{i=0}^N u^{(i)}$ is the entropy solution of \eqref{conservation law} in the sense of Definition~\ref{def: entropy solution}. This can be seen by adding the entropy inequalities of $u^{(i)}$ and choosing the respective constant in each entropy inequality according to \eqref{definition c}.


To prove our main theorem, we will need the following lemma showing that the flux $f(k(\cdot),u)$ is Lipschitz continuous in space. This consequence of the strict monotonicity of the flux constitutes a generalization of \cite[Lem. 4.7]{BadwaikRufconvergence} where it was derived for partial domains.
\begin{lemma}(Lipschitz continuity in space)\label{lem: Lipschitz continuity in space}
  Let $u$ be the entropy solution of~\eqref{conservation law}. Then the flux $f(k(\cdot),u(\cdot,t))\colon \R\to \R$ is Lipschitz continuous in the sense that
  \begin{equation*}
    \int_0^T |f(k(x),u(x,t)) - f(k(y),u(y,t))|\diff t\leq C|x-y| \qquad \text{for all }x,y\in \R.
  \end{equation*}
\end{lemma}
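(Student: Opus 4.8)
The plan is to pass to the integrated (in space) form of the conservation law and to read off the spatial flux differences as temporal derivatives, so that the desired bound becomes a consequence of the temporal total variation estimate~\eqref{eqn: temporal TV bound of adapted entropy solutions}. Write $g(x,t)\coloneqq f(k(x),u(x,t))$ and fix $x<y$. The first step is to introduce the mass functional $m(t)\coloneqq\int_x^y u(\zeta,t)\diff\zeta$ and to establish, from the weak formulation of~\eqref{conservation law}, the identity $m'(t)=g(x,t)-g(y,t)$ for almost every $t\in(0,T)$, where $g(x,\cdot)$ and $g(y,\cdot)$ denote the (well-defined) traces of the flux. Formally this is immediate from $u_t=-g_x$, but I would make it rigorous by testing the weak formulation against $\varphi(t)\chi_\delta(\zeta)$, with $\chi_\delta$ a Lipschitz approximation of $\mathbbm{1}_{[x,y]}$, and letting $\delta\to0$; the boundary layers at $x$ and $y$ then reproduce exactly the trace values $g(x,t)$ and $g(y,t)$. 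Since $u$ is bounded by~\eqref{eqn: L infty bound of adapted entropy solutions} and $f\in\mathcal{C}^2$, the right-hand side $g(x,\cdot)-g(y,\cdot)$ lies in $\Linfty(0,T)$, so $m$ is Lipschitz in time, hence absolutely continuous, and
\begin{equation*}
  \int_0^T|g(x,t)-g(y,t)|\diff t=\int_0^T|m'(t)|\diff t=\TV_{[0,T]}(m).
\end{equation*}

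The second step is the elementary estimate $\TV_{[0,T]}(m)\leq\int_x^y\TV_{[0,T]}(u(\zeta,\cdot))\diff\zeta$, obtained by applying the triangle inequality to $m(t_{j+1})-m(t_j)=\int_x^y(u(\zeta,t_{j+1})-u(\zeta,t_j))\diff\zeta$ along an arbitrary partition of $[0,T]$ and invoking Tonelli's theorem. Combining this with the temporal total variation bound~\eqref{eqn: temporal TV bound of adapted entropy solutions} yields
\begin{equation*}
  \int_0^T|g(x,t)-g(y,t)|\diff t\leq\int_x^y\TV_{[0,T]}(u(\zeta,\cdot))\diff\zeta\leq C\,\TV(u_0)\,|x-y|,
\end{equation*}
which is the claimed estimate, with $C$ depending only on $\TV(u_0)$.

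The main obstacle is the rigorous justification of the identity $m'(t)=g(x,t)-g(y,t)$ when the interval $(x,y)$ straddles one or several discontinuities $\xi_\ell$ of $k$. Across such an interface the flux $g(\cdot,t)$ a priori has one-sided traces $f^{(\ell-1)}(u(\xi_\ell-,t))$ and $f^{(\ell)}(u(\xi_\ell+,t))$, and one must verify that these do not contribute a spurious jump to $m'$. This is precisely where the Rankine--Hugoniot condition~\eqref{continuous Rankine--Hugoniot condition} enters: it guarantees that $g(\cdot,t)$ is continuous across every interior $\xi_\ell$ for almost every $t$, so that, in the limit $\delta\to0$, only the outermost traces at $x$ and $y$ survive. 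Equivalently, one may argue subdomain by subdomain, applying the computation above on each $\overline{D_i}$ with the shared-endpoint traces supplied by~\eqref{continuous Rankine--Hugoniot condition} and summing the resulting bounds, whose right-hand sides telescope to $C\,\TV(u_0)\,|x-y|$. Both the existence of the traces used here and the temporal bound~\eqref{eqn: temporal TV bound of adapted entropy solutions} ultimately rest on the strict monotonicity $f_u\geq\alpha>0$ through \Cref{thm: existence and uniqueness of entropy solutions}.
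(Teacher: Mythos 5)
Your proof is correct, but it takes a genuinely different route from the paper. The paper works subdomain by subdomain: on $D_0$ it uses the strict monotonicity $f_u\geq\alpha>0$ to invert $f^{(0)}$, sets $w(t,x)=f^{(0)}(u(-x,t))$, observes that $w$ solves a conservation law with the roles of $x$ and $t$ reversed, and then cites off-the-shelf Lipschitz-continuity-in-time results for that rotated problem on the bounded ``time'' domain $[0,T]$ (Holden--Risebro and Ridder--Ruf), feeding in the bounds \eqref{eqn: TV bound of adapted entropy solutions} and \eqref{eqn: temporal TV bound of adapted entropy solutions}; finally it crosses the interfaces with the Rankine--Hugoniot condition \eqref{continuous Rankine--Hugoniot condition} and telescopes, exactly as in your closing alternative. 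Your argument instead integrates the equation in space: the flux difference becomes the time derivative of the mass $m(t)=\int_x^y u(\zeta,t)\diff\zeta$, so that $\int_0^T|f(k(x),u(x,t))-f(k(y),u(y,t))|\diff t=\TV_{[0,T]}(m)$, which Tonelli and \eqref{eqn: temporal TV bound of adapted entropy solutions} bound by $C\,\TV(u_0)\,|x-y|$. This is more elementary and self-contained -- no $x$--$t$ swap, no bounded-domain theory for the rotated equation, and no inversion of the flux at this stage (monotonicity enters only through \Cref{thm: existence and uniqueness of entropy solutions}) -- and it treats arbitrary $x,y$ in one stroke, since the test function $\varphi(t)\chi_\delta(\zeta)$ only produces boundary layers at $x$ and $y$, the interior interfaces contributing nothing once one knows $u$ is a global weak solution. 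That last point is the one ingredient you must (and do) flag: the identity $m'(t)=g(x,t)-g(y,t)$ requires the trace terms at interior interfaces $\xi_\ell$ to cancel, which is exactly the Rankine--Hugoniot condition \eqref{continuous Rankine--Hugoniot condition}; within the paper this, together with the existence of traces and the $\Linfty$ bound \eqref{eqn: L infty bound of adapted entropy solutions} making $m$ Lipschitz, is supplied by the remarks following \Cref{def: entropy solution}, so your proof is complete modulo facts the paper already provides. As a side benefit, your route gives the cleaner constant $C\,\TV(u_0)$ and would extend to any setting where \eqref{eqn: temporal TV bound of adapted entropy solutions} and the interface conditions are available, whereas the paper's proof is tied to invertibility of the $u\mapsto f^{(i)}(u)$.
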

\begin{proof}
  We first show
  \begin{equation}
    \int_0^T |f^{(i)}(u(x,t)) - f^{(i)}(u(y,t))|\diff t\leq C|x-y| \qquad \text{for all }x,y\in D_i,\ i=0,\ldots,N.
    \label{eqn: intermediate Lipschitz continuity in space}
  \end{equation}
  To that end, we will show~\eqref{eqn: intermediate Lipschitz continuity in space} on $D_0=(-\infty,\xi_1)$ and note that the techniques can be readily adapted for $D_i$, $i=1,\ldots,N$.
  Since $u$ is bounded, we can assume that $\beta\geq(f^{(0)})'\geq \alpha>0$ for some $\alpha,\beta\in\R$. Thus $f^{(0)}$ is invertible with Lipschitz continuous inverse. Setting $w(t,x)=f^{(0)}(u(-x,t))$ and $h(w)=-(f^{(0)})^{-1}$ we find that $w$ satisfies
  \begin{equation*}
    w_x + h(w)_t =0,\qquad (t,x)\in (0,T)\times \R^+.
  \end{equation*}
  Standard theory for conservation laws (with the roles of $x$ and $t$ reversed) adapted to the bounded domain $[0,T]$ shows that $w$ is Lipschitz continuous in $x$ with values in $\mathrm{L}^1(0,T)$, i.e.,
  \begin{equation*}
    \int_0^T |f^{(0)}(u(x,t)) - f^{(0)}(u(y,t))|\diff t = \int_0^T |w(t,-x)-w(t,-y)|\diff t \leq C|x-y|,
  \end{equation*}
  cf. \cite[Thm. 2.15]{holden2015front} or \cite[Lem. 4]{Ridder2019}. Note that an application of \cite[Lem. 4]{Ridder2019} requires, in particular, that
  \begin{align*}
    \TV_{\R^+}(w(0,\cdot)) &= \TV_{\R^-}(f^{(0)}(u_0)),\\
    \TV_{\R^+}(w(T,\cdot)) &= \TV_{\R^-}(f^{(0)}(u(\cdot,T)))\\
    \text{and}\qquad \TV_{[0,T]}(w(\cdot,0+)) &=\TV_{[0,T]}(f^{(0)}(u(0-,t)))
  \end{align*}
  are bounded. The first two quantities are bounded by $C\|f^{(0)}\|_{\mathrm{Lip}}(\TV(k)+\TV(u_0))$ because of~\eqref{eqn: TV bound of adapted entropy solutions} and the third is bounded by $C\|f^{(0)}\|_{\mathrm{Lip}}\TV(u_0)$ due to~\eqref{eqn: temporal TV bound of adapted entropy solutions}.

  Let now $x,y\in\R$. Without restriction we can assume that $y\in D_i$ and $x\in D_{i+j}$. Thus, using the Rankine--Hugoniot condition across the interfaces $\xi_k$ and~\eqref{eqn: intermediate Lipschitz continuity in space} iteratively, we get
  \begin{align*}
    \int_0^T |f(k(x),u(x,t)) & - f(k(y),u(y,t))|\diff t \\
    &\leq \int_0^T \left|f^{(i+j)}(u(x,t)) - f^{(i+j)}(u(\xi_{i+j}+,t))\right|\diff t \\
    &\phantom{\mathrel{=}}+ \sum_{k=1}^{j-1} \int_0^T \left|f^{(i+k)}(u(\xi_{i+k+1}-,t)) - f^{(i+k)}(u(\xi_{i+k}+,t))\right|\diff t\\
    &\phantom{\mathrel{=}}+ \int_0^T \left|f^{(i)}(u(\xi_{i+1}-,t)) - f^{(i)}(u(y,t))\right|\diff t\\
    &\leq C\left(|x-\xi_{i+j}| + \sum_{k=1}^{j-1} |\xi_{i+k+1} - \xi_{i+k}| + |\xi_{i+1} - y| \right)\\
    &= C|x-y|.
  \end{align*}
\end{proof}

\section{Stability for fluxes with one discontinuity}\label{sec: Stability with one discontinuity}
We will first consider the case where $k$ has just two constant values separated by a discontinuity $\xi_1$ and for ease of notation we will assume that $\xi_1=0$. Further, we will denote the flux left of $\xi_1$ as $g$ and right of $\xi_1$ as $f$. In order to prove flux-stability we will derive stability results on $D_0=\R^-$, on $D_1=\R^+$, and on $(0,L)$ for $L>0$ since those three cases represent the fundamental scenarios in the general case.

\subsection{Stability estimates on $\R^-$}\label{subsec: stability on R-}
As a first step we consider the initial value problem
\begin{gather}
\begin{aligned}
  u_t + g(u)_x = 0,& &&(x,t)\in \R^-\times(0,T),\\
  u(x,0) = u_0(x),& &&x\in \R^-
\end{aligned}
\label{conservation law on R-}
\end{gather}
on $\R^-$ with the flux $g$ being strictly monotone and consider entropy solutions in the following sense.
\begin{definition}[Entropy solution on $\R^-$]\label{def: entropy solution on R-}
  We say $u\in\mathcal{C}([0,T];\mathrm{L}^1(\R^-))\cap\mathrm{L}^\infty((0,T)\times\R^-)$ is an entropy solution of~\eqref{conservation law on R-} if for all $c\in\R$,
  \begin{multline*}
    \int_0^T \int_{\R^-} (|u-c|\varphi_t + |g(u)-g(c)|\varphi_x) \diff x\diff t -\int_{\R^-}|u(x,T)-c|\varphi(x,T)\diff x \\
    +\int_{\R^-}|u_0(x)-c|\varphi(x,0)\diff x - \int_0^T |g(u(0-,t))-g(c)|\varphi(0,t)\diff t \geq 0
  \end{multline*}
  for all nonnegative $\varphi\in\mathcal{C}^\infty_c((-\infty,0]\times[0,T])$.
\end{definition}
Note that here $u(0-,t)$ denotes the limit of $u(x,t)$ as $x\to0$ from the left.

In order to prove flux-stability we will use a Kuznetsov-type lemma.
For any function $u\in\mathcal{C}([0,T];\rm{L}^1(\R^-))$ we define
\begin{align*}
  L^-(u,c,\varphi) =& \int_0^T\int_{\R^-} \left( |u-c| \varphi_t +  q(u,c)\varphi_x \right)\diff x\diff t -\int_{\R^-} |u(x,T)-c|\varphi(x,T)\diff x\\
  &+ \int_{\R^-} |u_0(x)-c|\varphi(x,0)\diff x - \int_0^T q(u(0-,t),c)\varphi(0,t)\diff t
\end{align*}
where $q(u,c)=\sign(u-c)(g(u)-g(c))=|g(u)-g(c)|$ is the Kru\v{z}kov entropy flux. Note that if $u$ is an entropy solution of~\eqref{conservation law on R-} then $L^-(u,c,\varphi)\geq 0$ for all $c\in\R$ and test functions $\varphi\geq 0$. We now take $c=v(y,s)$ and the test function
\begin{equation*}
  \varphi(x,t,y,s) = \omega_{\varepsilon}(x-y)\omega_{\varepsilon_0}(t-s)
\end{equation*}
where $\omega_{\varepsilon},\omega_{\varepsilon_0}$ are symmetric standard mollifiers for $\varepsilon,\varepsilon_0>0$. Note that $\varphi_t = -\varphi_s$, $\varphi_x=-\varphi_y$ and
\begin{equation*}
  \varphi(x,t,y,s) = \varphi(y,t,x,s) = \varphi(y,s,x,t) = \varphi(x,s,y,t)
\end{equation*}
as well as
\begin{align*}
\begin{aligned}
  \int_{\R}\omega_{\varepsilon}(x-y)\diff y &\leq 1,\\
  \int_0^T \omega_{\varepsilon_0}(t-s)\diff s  &\leq 1,
\end{aligned}
&&
\begin{aligned}
  \int_{\R} |\omega_{\varepsilon}'(x-y)|\diff y &\leq \frac{C}{\varepsilon},\\
  \int_0^T |\omega_{\varepsilon_0}'(t-s)|\diff s &\leq \frac{C}{\varepsilon_0}
\end{aligned}
\end{align*}
for all $x\in\R$, $t\in [0,T]$.
Let now
\begin{equation*}
  \Lambda_{\varepsilon,\varepsilon_0}^-(u,v) = \int_0^T\int_{\R^-} L^-(u,v(y,s),\varphi(\cdot,\cdot,y,s))\diff y\diff s.
\end{equation*}
For functions $w\in\mathcal{C}([0,T];\mathrm{L}^1(\R^-))$, we further define the moduli of continuity
\begin{align*}
  \nu_t(w,\varepsilon_0) &= \sup_{|\sigma|\leq \varepsilon_0} \|w(\cdot,t+\sigma)-w(\cdot,t)\|_{\mathrm{L}^1(\R^-)},\\
  \mu(w(\cdot,t),\varepsilon) &= \sup_{|z|\leq \varepsilon} \|w(\cdot+z,t)-w(\cdot,t)\|_{\mathrm{L}^1(\R^-)}.
\end{align*}
We have the following Kuznetsov-type lemma which was proved in~\cite{BadwaikRufconvergence}.
\begin{lemma}[{Kuznetsov-type lemma \cite[Lem. 4.2]{BadwaikRufconvergence}}]\label{Lemma: Kuznetsov}
  Let $u$ be the entropy solution of~\eqref{conservation law on R-}. Then, for any function $v:[0,T]\to(\Lone\cap\BV)(\R^-)$ such that the one-sided limits $v(t\pm)$ exist in $\Lone(\R^-)$, we have
  \begin{multline}
    \|u(\cdot,T) - v(\cdot,T)\|_{\mathrm{L}^1(\R^-)} \\+ \int_0^T\int_{\R^-}\int_0^T\left( q(u(0-,t),v(y,s)) + q(v(0-,t),u(y,s)) \right)\varphi(0,t,y,s)\diff t\diff y\diff s\\
    \shoveleft{\leq \|u_0 - v(\cdot,0)\|_{\mathrm{L}^1(\R^-)} - \Lambda_{\varepsilon,\varepsilon_0}^-(v,u)}\\
    + C\bigg( \varepsilon +\varepsilon_0 + \nu_T(v,\varepsilon_0) + \nu_0(v,\varepsilon_0) + \mu(v(\cdot,T),\varepsilon)) + \mu(v(\cdot,0),\varepsilon)) \bigg)
    \label{Kuznetsov lemma estimate on R-}
  \end{multline}
  for some constant $C$ independent of $\varepsilon$ and $\varepsilon_0$.
\end{lemma}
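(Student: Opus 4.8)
The plan is to run the classical Kuznetsov doubling-of-variables argument, adapted to the half-line $\R^-$ so that the spatial boundary contribution at $x=0$ survives verbatim. Since $u$ is an entropy solution of~\eqref{conservation law on R-}, we have $L^-(u,c,\varphi)\geq 0$ for every $c\in\R$ and every nonnegative test function. Taking $c=v(y,s)$ and integrating the nonnegative kernel $\varphi(\cdot,\cdot,y,s)$ over $(y,s)\in(0,T)\times\R^-$ yields $\Lambda_{\varepsilon,\varepsilon_0}^-(u,v)\geq 0$. The entire argument then consists of rewriting this single inequality.

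The key algebraic step is the cancellation of the interior space-time integrals. Writing the interior part of $\Lambda_{\varepsilon,\varepsilon_0}^-(u,v)$ as $I_{uv}$ and the corresponding part of $\Lambda_{\varepsilon,\varepsilon_0}^-(v,u)$ as $I_{vu}$, I would swap the integration pairs $(x,t)\leftrightarrow(y,s)$ in $I_{vu}$ and use the symmetry $\varphi(x,t,y,s)=\varphi(y,s,x,t)$ together with $\varphi_t=-\varphi_s$, $\varphi_x=-\varphi_y$ (equivalently, the evenness of $\omega_\varepsilon,\omega_{\varepsilon_0}$ and the oddness of their derivatives), plus $|u-c|=|c-u|$ and $q(u,c)=q(c,u)$. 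This gives $I_{uv}+I_{vu}=0$ purely algebraically, with no integration by parts, so no spurious boundary terms appear. Replacing $I_{uv}$ by $-I_{vu}$ inside $\Lambda_{\varepsilon,\varepsilon_0}^-(u,v)\geq 0$ and regrouping $I_{vu}$ with the remaining pieces of $\Lambda_{\varepsilon,\varepsilon_0}^-(v,u)$, I obtain an inequality of the form
\begin{equation*}
  -(II_{uv}+II_{vu}) - (IV_{uv}+IV_{vu}) \leq -\Lambda_{\varepsilon,\varepsilon_0}^-(v,u) + (III_{uv}+III_{vu}),
\end{equation*}
where $II$, $III$, $IV$ denote the terminal-time, initial-time, and $x=0$ boundary contributions of the respective functionals.

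It remains to identify each grouped term. The boundary contributions combine exactly: $-(IV_{uv}+IV_{vu})$ is precisely the triple integral of $\left(q(u(0-,t),v(y,s))+q(v(0-,t),u(y,s))\right)\varphi(0,t,y,s)$ on the left-hand side of~\eqref{Kuznetsov lemma estimate on R-}, so it is carried through unchanged. For the terminal term I would use the triangle inequality to replace $v(y,s)$ by $v(x,T)$ in $-II_{uv}$ (and $u(y,s)$ by $u(x,T)$ in the symmetric half $-II_{vu}$), controlling the space shift by $\mu(v(\cdot,T),\varepsilon)$ and the time shift by $\nu_T(v,\varepsilon_0)$; the analogous shifts of $u$ are bounded by $C\varepsilon$ and $C\varepsilon_0$ using the spatial $\BV$ bound~\eqref{eqn: TV bound of adapted entropy solutions} and the temporal $\TV$ bound~\eqref{eqn: temporal TV bound of adapted entropy solutions}, which together give Lipschitz-in-time continuity of $u$ in $\mathrm{L}^1$. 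Because $\omega_{\varepsilon_0}$ is symmetric and $T$ is the right endpoint, $\int_0^T\omega_{\varepsilon_0}(T-s)\diff s=\tfrac12$, so each of $-II_{uv}$ and $-II_{vu}$ recovers half of $\|u(\cdot,T)-v(\cdot,T)\|_{\mathrm{L}^1(\R^-)}$, and together they give the full norm up to errors; the spatial mass defect of $\omega_\varepsilon$ near $x=0$ contributes a further $O(\varepsilon)$. The initial terms $III_{uv}+III_{vu}$ are treated identically at $t=0$, yielding $+\|u_0-v(\cdot,0)\|_{\mathrm{L}^1(\R^-)}$ together with $\nu_0(v,\varepsilon_0)$, $\mu(v(\cdot,0),\varepsilon)$, and $O(\varepsilon)+O(\varepsilon_0)$. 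Collecting everything produces~\eqref{Kuznetsov lemma estimate on R-}.

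I expect the main obstacle to be the careful bookkeeping at the temporal endpoints: the symmetric mollifier captures only half of its mass at $t=0$ and $t=T$, so the full $\mathrm{L}^1$ norms at those times emerge only after symmetrization, each of the two doublings supplying one half. Simultaneously one must keep the spatial boundary flux at $x=0$ exact rather than estimating it, and verify that the interior cancellation genuinely produces no additional boundary term there. This retention of the boundary flux is the one truly new feature compared with the classical whole-line Kuznetsov lemma, and it is what makes the estimate usable on the subdomains $D_i$ in the global argument.
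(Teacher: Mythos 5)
Your proposal is correct and takes essentially the same route as the paper: the paper does not reprove this lemma but imports it from \cite[Lem.~4.2]{BadwaikRufconvergence}, whose proof is exactly the doubling-of-variables argument you outline --- entropy inequality for $u$ tested against $c=v(y,s)$, purely algebraic cancellation of the interior integrals via the mollifier symmetries, exact retention of the $x=0$ flux terms, and recovery of the endpoint $\mathrm{L}^1$ norms from the two half-mass contributions with moduli-of-continuity errors. The only cosmetic point is that the $C(\varepsilon+\varepsilon_0)$ terms should be justified by the TVD and $\mathrm{L}^1$-Lipschitz-in-time properties of the entropy solution of~\eqref{conservation law on R-} itself (standard half-line theory), rather than by~\eqref{eqn: TV bound of adapted entropy solutions} and~\eqref{eqn: temporal TV bound of adapted entropy solutions}, which are stated for the full problem~\eqref{conservation law}.
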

We are now ready to prove flux-stability on $\R^-$ by estimating the term $\Lambda^-_{\varepsilon,\varepsilon_0}(v,u)$ in \Cref{Lemma: Kuznetsov} when $v$ is the entropy solution of~\eqref{conservation law on R-} with a different flux $\tilde{g}$. To that end, we employ similar techniques as in~\cite{lucier1986moving}.
\begin{theorem}[Stability on $\R^-$]\label{thm: stability on R^-}
  Let $u$ be the entropy solution of~\eqref{def: entropy solution on R-} and $v$ the entropy solution of
  \begin{gather}
    \begin{aligned}
      v_t + \tilde{g}(v)_x = 0,& &&(x,t)\in\R^-\times(0,T),\\
      v(x,0)=v_0(x),& &&x\in\R^-
    \end{aligned}
    \label{conservation law on R- for g tilde}
  \end{gather}
  then
  \begin{equation*}
    \|u(\cdot,T) - v(\cdot,T)\|_{\mathrm{L}^1(\R^-)} \leq \|u_0 - v_0\|_{\mathrm{L}^1(\R^-)} + \|g-\tilde{g}\|_{\mathrm{Lip}} \cdot T\cdot \TV(v_0).
  \end{equation*}
\end{theorem}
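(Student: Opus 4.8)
The plan is to feed the entropy solution $v$ of~\eqref{conservation law on R- for g tilde} into the Kuznetsov-type estimate of \Cref{Lemma: Kuznetsov} and to control the only term that is not yet of the desired form, namely $-\Lambda_{\varepsilon,\varepsilon_0}^-(v,u)$. The entropy-dissipation integral involving $q(u(0-,t),v(y,s))+q(v(0-,t),u(y,s))$ on the left-hand side of~\eqref{Kuznetsov lemma estimate on R-} is nonnegative (both summands are nonnegative and $\varphi\geq 0$), so it may simply be discarded. Since $v(\cdot,0)=v_0$, it then remains to bound $-\Lambda_{\varepsilon,\varepsilon_0}^-(v,u)$ by $\|g-\tilde{g}\|_{\mathrm{Lip}}\,T\,\TV(v_0)$ uniformly in $\varepsilon,\varepsilon_0$ and to let $\varepsilon,\varepsilon_0\to 0$; the remainder $C(\varepsilon+\varepsilon_0+\nu_T(v,\varepsilon_0)+\nu_0(v,\varepsilon_0)+\mu(v(\cdot,T),\varepsilon)+\mu(v(\cdot,0),\varepsilon))$ vanishes in the limit because $v\in\mathcal{C}([0,T];\mathrm{L}^1(\R^-))$ has vanishing temporal modulus of continuity and each $v(\cdot,t)\in\BV(\R^-)$ has spatial $\mathrm{L}^1$-modulus of continuity bounded by $\varepsilon\,\TV(v(\cdot,t))$.

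The heart of the matter is the estimate on $\Lambda_{\varepsilon,\varepsilon_0}^-(v,u)$, where I would exploit that $v$ solves the conservation law with flux $\tilde{g}$. Let $\tilde{L}^-$ and $\tilde{q}(w,c)=|\tilde{g}(w)-\tilde{g}(c)|$ denote the analogues of $L^-$ and $q$ with $g$ replaced by $\tilde{g}$; by \Cref{def: entropy solution on R-} applied with flux $\tilde{g}$ we have $\tilde{L}^-(v,c,\varphi)\geq 0$ for every $c$ and every $\varphi\geq 0$. Writing $L^-(v,c,\varphi)=\tilde{L}^-(v,c,\varphi)+\big(L^-(v,c,\varphi)-\tilde{L}^-(v,c,\varphi)\big)$, the time-derivative and time-slice contributions cancel and only the entropy-flux terms survive,
\begin{equation*}
  L^-(v,c,\varphi)-\tilde{L}^-(v,c,\varphi)=\int_0^T\!\!\int_{\R^-} E(v,c)\,\varphi_x\diff x\diff t-\int_0^T E(v(0-,t),c)\,\varphi(0,t)\diff t,
\end{equation*}
where $E(w,c)\coloneqq q(w,c)-\tilde{q}(w,c)=\sign(w-c)\big(e(w)-e(c)\big)$ and $e\coloneqq g-\tilde{g}$. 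Integrating the first integral by parts in $x$ produces a boundary contribution $E(v(0-,t),c)\varphi(0,t)$ that cancels the explicit boundary term exactly, so that $L^-(v,c,\varphi)-\tilde{L}^-(v,c,\varphi)=-\int_0^T\!\int_{\R^-}\partial_x\big[E(v(\cdot,t),c)\big]\varphi\diff x\diff t$. Since $\tilde{L}^-(v,c,\varphi)\geq 0$, integrating over $(y,s)$ with $c=u(y,s)$ and using $\varphi\geq 0$ yields
\begin{equation*}
  -\Lambda_{\varepsilon,\varepsilon_0}^-(v,u)\leq \int_0^T\!\!\int_{\R^-}\!\int_0^T\!\!\int_{\R^-}\big|\partial_x\big[E(v(\cdot,t),u(y,s))\big]\big|\,\varphi\diff x\diff t\diff y\diff s.
\end{equation*}

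The key estimate, in the spirit of Lucier, is the measure bound $\big|\partial_x[E(v(\cdot,t),c)]\big|\leq \|e\|_{\mathrm{Lip}}\,|\partial_x v(\cdot,t)|$, which I would verify by checking that the jump of the continuous map $w\mapsto \sign(w-c)(e(w)-e(c))$ across any jump of $v$ is at most $\|e\|_{\mathrm{Lip}}$ times the jump of $v$ (distinguishing whether $c$ lies between the one-sided traces of $v$), the absolutely continuous part being controlled by $|e'|\leq\|e\|_{\mathrm{Lip}}$. Integrating out the mollifiers via $\int_{\R^-}\omega_\varepsilon(x-y)\diff y\leq 1$ and $\int_0^T\omega_{\varepsilon_0}(t-s)\diff s\leq 1$ collapses the $(y,s)$-integration and leaves $\|e\|_{\mathrm{Lip}}\int_0^T\TV(v(\cdot,t))\diff t$. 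Finally I would invoke the bound $\TV(v(\cdot,t))\leq \TV(v_0)$ on $\R^-$ — valid because $\tilde{g}'\geq\alpha>0$ renders $x=0$ a pure outflow boundary, so the half-line solution inherits the whole-line total-variation bound — to conclude $-\Lambda_{\varepsilon,\varepsilon_0}^-(v,u)\leq \|g-\tilde{g}\|_{\mathrm{Lip}}\,T\,\TV(v_0)$, and the theorem follows upon sending $\varepsilon,\varepsilon_0\to 0$.

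I expect the main obstacle to be the measure-theoretic estimate $\big|\partial_x[E(v(\cdot,t),c)]\big|\leq\|g-\tilde{g}\|_{\mathrm{Lip}}\,|\partial_x v(\cdot,t)|$ together with the verification that the boundary term at $x=0$ cancels after integration by parts, since both require handling the $\sign$ discontinuity and the one-sided traces of the $\BV$ function $v$ with care; once these are established, the remaining steps are routine applications of the mollifier bounds, the total-variation bound, and the limit $\varepsilon,\varepsilon_0\to 0$.
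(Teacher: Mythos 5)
Your proposal is correct and takes essentially the same route as the paper's proof: the Kuznetsov-type lemma, discarding the nonnegative boundary dissipation term, estimating $-\Lambda^-_{\varepsilon,\varepsilon_0}(v,u)$ by invoking the entropy inequality for $v$ with flux $\tilde{g}$, integrating by parts so that the boundary contributions cancel, the Lucier-type measure bound $\bigl|\partial_x\bigl[\sign(v-c)\bigl((g-\tilde{g})(v)-(g-\tilde{g})(c)\bigr)\bigr]\bigr|\leq\|g-\tilde{g}\|_{\mathrm{Lip}}\,|\partial_x v|$, the mollifier collapse, and the TVD bound $\TV(v(\cdot,t))\leq\TV(v_0)$. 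The only differences are presentational: the paper carries the boundary term through to the intermediate estimate~\eqref{eqn: almost stability on R^-} because it is reused later in the proof of \Cref{thm: stability on R^+}, and it verifies the key Lipschitz bound via the identity $\sign(a-b)(h(a)-h(b))=h(a\vee b)-h(a\wedge b)$ rather than your direct case analysis of jump and absolutely continuous parts.
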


\begin{proof}
  We use the Kuznetsov-type lemma \ref{Lemma: Kuznetsov} to compare  $v$ to $u$.
  Due to the Lipschitz continuity in time and the TVD property (see \cite[Thm. 2.15 and Lem. A.1]{holden2015front}) the entropy solution $v$ of~\eqref{conservation law on R- for g tilde} satisfies
  \begin{align*}
    \nu_0(v,\varepsilon_0),\,\nu_T(v,\varepsilon_0) &\leq \|\tilde{g}\|_{\mathrm{Lip}}\TV(v_0)\varepsilon_0,\\
    \mu(v(\cdot,0),\varepsilon),~\mu(v(\cdot,T),\varepsilon) &\leq \TV(v_0) \varepsilon,
  \end{align*}
  and the entropy condition
  \begin{multline*}
    \int_0^T\int_{\R^-} (|v-c|\varphi_t + |\tilde{g}(v) - \tilde{g}(c)|\varphi_x )\diff x\diff t
     - \int_{\R^-}|v(x,T) - c|\varphi(x,T)\diff x \\
     + \int_{\R^-}|v_0(x)-c|\varphi(x,0)\diff x - \int_0^T|\tilde{g}(v(0-,t))-\tilde{g}(c)|\varphi(0,t)\diff t \geq 0.
  \end{multline*}
  Thus we can estimate
  \begin{align*}
    -&\Lambda_{\varepsilon,\varepsilon_0}^-(v,u)\\
    &= -\int_0^T\int_{\R^-}\int_0^T\int_{\R^-} \left(|v(x,t)-u(y,s)|\varphi_t + |g(v(x,t))-g(u(y,s))| \varphi_x \right)\diff x\diff t\diff y\diff s\\
     &\mathrel{\phantom{=}}- \int_0^T\int_{\R^-}\int_{\R^-}|v(x,T) - u(y,s)|\varphi(x,T,y,s)\diff x\diff y\diff s\\
     &\mathrel{\phantom{=}}+ \int_0^T\int_{\R^-}\int_{\R^-}|v_0(x)-u(y,s)|\varphi(x,0,y,s)\diff x\diff y\diff s \\
     &\mathrel{\phantom{=}}- \int_0^T|g(v(0-,t))-g(u(y,s))|\varphi(0,t)\diff t\diff y\diff s\\
     &\leq \int_0^T\int_{\R^-}\int_0^T\int_{\R^-}\left( |\tilde{g}(v(x,t))-\tilde{g}(u(y,s))| - |g(v(x,t))-g(u(y,s))|\right)\varphi_x\diff x\diff t\diff y \diff s\\
     &\mathrel{\phantom{=}} \int_0^T\int_{\R^-}\int_0^T\left( |\tilde{g}(v(0-,t))-\tilde{g}(u(y,s))| - |g(v(0-,t))-g(u(y,s))|\right)\varphi(0,t,y,s)\diff t\diff y \diff s\\
     &=\int_0^T\int_{\R^-}\int_0^T\int_{\R^-} \partial_x\left(|g(v(x,t))-g(u(y,s))| - |\tilde{g}(v(x,t))-\tilde{g}(u(y,s))|\right)\varphi\diff x\diff t\diff y\diff s.
  \end{align*}
  We define $h = g-\tilde{g}$. Then
  \begin{equation*}
    |g(v(x,t)) - g(u(y,s))| - |\tilde{g}(v(x,t)) - \tilde{g}(u(y,s))| = \sign(v(x,t) - u(y,s))(h(v(x,t)) - h(u(y,s)))
  \end{equation*}
  which we will define as $H(v(x,t))$. Since $|h(a) - h(b)| = h(a\vee b) - h(a\wedge b)$ for all $a,b\in\R$ we have
  \begin{align*}
    |H(&v(x,t)) - H(v(\tilde{x},t))|\\
    &= | h(v(x,t)\vee u(y,s)) - h(v(x,t)\wedge u(y,s)) - (h(v(\tilde{x},t)\vee u(y,s)) - h(v(\tilde{x},t)\wedge u(y,s)))|\\
    &\leq |h(v(x,t)\vee u(y,s)) - h(v(\tilde{x},t)\vee u(y,s))| + |h(v(x,t)\wedge u(y,s)) - h(v(\tilde{x},t)\wedge u(y,s))|.
  \end{align*}
  By exhausting all the cases, we find that the last line is bounded by
  \begin{equation*}
    \|h\|_{\mathrm{Lip}}\cdot|v(x,t)-v(\tilde{x},t)|.
  \end{equation*}
  Thus we have
  \begin{equation*}
    \frac{|H(v(x,t)) - H(v(\tilde{x},t))|}{|v(x,t)-v(\tilde{x},t)|} \leq \|h\|_{\mathrm{Lip}} = \|g - \tilde{g}\|_{\mathrm{Lip}}
  \end{equation*}
  and hence
  \begin{equation*}
    |\partial_x H(v(x,t))|
  \end{equation*}
  is bounded as a measure by
  \begin{equation*}
    |\partial_v H(v(x,t))\partial_x v(x,t)| \leq \|g-\tilde{g}\|_{\mathrm{Lip}}|\partial_x v(x,t)|.
  \end{equation*}
  This estimate allows us to bound $-\Lambda_{\varepsilon,\varepsilon}(v,u)$ as follows:
  \begin{align*}
    -\Lambda_{\varepsilon,\varepsilon_0}^-(v,u) &\leq \int_0^T\int_{\R^-}\int_0^T\int_{\R^-} \partial_x\left(|g(v(x,t))-g(u(y,s))| - |\tilde{g}(v(x,t))-\tilde{g}(u(y,s))|\right)\varphi\diff x\diff t\diff y\diff s\\
    &\leq \|g-\tilde{g}\|_{\mathrm{Lip}} \int_0^T\int_{\R^-}\int_0^T\int_{\R^-}|\partial_x v(x,t)|\omega_\varepsilon(x-y)\omega_{\varepsilon_0}(t-s)\diff x\diff t\diff y\diff s\\
    &= \|g-\tilde{g}\|_{\mathrm{Lip}} \int_0^T\int_{\R^-} |\partial_x v(y,s)|\diff x\diff t\\
    &\leq \|g-\tilde{g}\|_{\mathrm{Lip}} \cdot T\cdot \TV(v_0).
  \end{align*}
  By inserting these estimates into~\eqref{Kuznetsov lemma estimate on R-}, we obtain
  \begin{multline}
    \|u(\cdot,T) - v(\cdot,T)\|_{\mathrm{L}^1(\R^-)} \\+ \int_0^T\int_{\R^-}\int_0^T\left( q(u(0-,t),v(y,s)) + q(v(0-,t),u(y,s)) \right)\varphi(0,t,y,s)\diff t\diff y\diff s\\
    \leq \|u_0 - v_0\|_{\mathrm{L}^1(\R^-)} + C(\varepsilon + \varepsilon_0) + \|g-\tilde{g}\|_{\mathrm{Lip}} \cdot T\cdot \TV(v_0).
    \label{eqn: almost stability on R^-}
  \end{multline}
  Since $q(u,v) = \sign(u-v)(g(u)-g(v))= |u-v|\geq 0$ the integral term on the left-hand side of \eqref{eqn: almost stability on R^-} is nonnegative such that the left-hand side can be bounded from below by $\|u(\cdot,T)-v(\cdot,T)\|_{\mathrm{L}^1(\R^-)}$.
  The claim follows by passing to the limit $\varepsilon,\varepsilon_0\to 0$.
\end{proof}

\subsection{Stability estimates on $\R^+$}\label{subsec: stability on R+}
As a second step we now consider the initial-boundary value problem
\begin{gather}
\begin{aligned}
  u_t + f(u)_x = 0,& &&(x,t)\in \R^+\times(0,T),\\
  u(x,0) = u_0(x),& &&x\in \R^+,\\
  u(0,t) = f^{-1}\left(g(u(0-,t)\right),& &&t\in (0,T),
\end{aligned}
\label{conservation law on R+}
\end{gather}
where the boundary datum is given in terms of $u(0-,t)$ from the previous section and we consider entropy solutions in the following sense.
\begin{definition}[Entropy solution on $\R^+$]\label{def: entropy solution on R+}
  We say $u\in\mathcal{C}([0,T];\mathrm{L}^1(\R^+))\cap\mathrm{L}^\infty(\R^+\times (0,T))$ is an entropy solution of~\eqref{conservation law on R+} if for all $c\in\R$,
  \begin{multline*}
    \int_0^T \int_{\R^+} (|u-c|\varphi_t + |f(u)-f(c)|\varphi_x) \diff x\diff t -\int_{\R^+}|u(x,T)-c|\varphi(x,T)\diff x\\
    +\int_{\R^+}|u_0(x)-c|\varphi(x,0)\diff x + \int_0^T |f(u(0+,t))-f(c)|\varphi(0,t)\diff t \geq 0
  \end{multline*}
  for all nonnegative $\varphi\in\mathcal{C}^\infty_c([0,\infty)\times[0,T])$ and
  \begin{equation*}
    f(u(0+,t)) = g(u(0-,t))
  \end{equation*}
  holds for almost every $t\in(0,T)$.
\end{definition}

Similarly to before, we define
\begin{align*}
  L^+(u,c,\varphi) =& \int_0^T\int_{\R^+} \left( |u-c| \varphi_t + q(u,c)\varphi_x \right)\diff x\diff t -\int_{\R^+} |u(x,T)-c|\varphi(x,T)\diff x\\
  &+ \int_{\R^+} |u_0(x)-c|\varphi(x,0)\diff x + \int_0^T q(u(0+,t),c)\varphi(0,t)\diff t
\end{align*}
and
\begin{equation}
  \Lambda_{\varepsilon,\varepsilon_0}^+(u,v) = \int_0^T\int_{\R^+} L^+(u,v(y,s),\varphi(\cdot,\cdot,y,s))\diff y\diff s
  \label{eqn: definition Lambda^+}
\end{equation}
where again $\varphi=\omega_{\varepsilon}(x-y)\omega_{\varepsilon_0}(t-s)$.
With this notation, we have the following Kuznetsov-type lemma \cite[Lem. 4.8]{BadwaikRufconvergence}.
\begin{lemma}[Kuznetsov-type lemma \cite{BadwaikRufconvergence}]\label{Lemma: Kuznetsov 2}
  Let $u$ be the entropy solution of~\eqref{conservation law on R+}. Then, for any function $v:[0,T]\to (\Lone\cap\BV)(\R^+)$ such that the one-sided limits $v(t\pm)$ exist in $\Lone$, we have
  \begin{multline*}
    \|u(\cdot,T) - v(\cdot,T)\|_{\mathrm{L}^1(\R^+)} \leq \|u_0 - v(\cdot,0)\|_{\mathrm{L}^1(\R^+)} - \Lambda_{\varepsilon,\varepsilon_0}^+(v,u)\\
    + C\bigg( \varepsilon +\varepsilon_0 + \nu_T(v,\varepsilon_0) + \nu_0(v,\varepsilon_0) + \mu(v(\cdot,T),\varepsilon)) + \mu(v(\cdot,0),\varepsilon)) \bigg) \\
    + \int_0^T\int_{\R^+}\int_0^T\left( q(u(0+,t),v(y,s)) + q(v(0+,t),u(y,s)) \right)\varphi(0,t,y,s)\diff t\diff y\diff s
  \end{multline*}
  for some constant $C$ independent of $\varepsilon$ and $\varepsilon_0$.
\end{lemma}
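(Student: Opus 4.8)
The plan is to carry out the classical Kuznetsov doubling-of-variables argument on the half-line, in complete parallel to the $\R^-$ version of \Cref{Lemma: Kuznetsov}; the only structural difference is the sign with which the spatial boundary contribution at $x=0$ enters, and this can be traced back directly to the plus sign of the boundary term in the definition of $L^+$ (as opposed to the minus sign in $L^-$), reflecting the opposite outward normal. Since $u$ is an entropy solution of~\eqref{conservation law on R+}, the entropy inequality $L^+(u,c,\varphi)\ge 0$ holds for every $c\in\R$ and every nonnegative test function. Taking $c=v(y,s)$ and $\varphi=\varphi(\cdot,\cdot,y,s)\ge 0$ and integrating over $(y,s)\in(0,T)\times\R^+$ gives $\Lambda_{\varepsilon,\varepsilon_0}^+(u,v)\ge 0$. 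Crucially, $v$ is only assumed to be $\BV$ in space with one-sided limits in time, so no entropy inequality is available for $v$; the term $-\Lambda_{\varepsilon,\varepsilon_0}^+(v,u)$ will therefore be carried, unestimated, to the right-hand side and bounded later in the concrete application.

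The heart of the bookkeeping is to write $\Lambda_{\varepsilon,\varepsilon_0}^+(u,v)=\big(\Lambda_{\varepsilon,\varepsilon_0}^+(u,v)+\Lambda_{\varepsilon,\varepsilon_0}^+(v,u)\big)-\Lambda_{\varepsilon,\varepsilon_0}^+(v,u)$ and to exploit the symmetries of $\varphi$. Since $\varphi_t=-\varphi_s$ and $\varphi_x=-\varphi_y$, while $|u-v|=|v-u|$ and $q(u,v)=q(v,u)$, the two fourfold interior integrals in $\Lambda_{\varepsilon,\varepsilon_0}^+(u,v)+\Lambda_{\varepsilon,\varepsilon_0}^+(v,u)$ cancel pointwise; in particular no integration by parts is performed and no new boundary terms are generated. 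What survives are the four time-slice terms at $t=0,T$ and $s=0,T$, together with the two spatial boundary contributions at $x=0$ and $y=0$. Using the mollifier symmetry $\varphi(0,s,x,t)=\varphi(x,t,0,s)$ and relabeling variables, the latter combine precisely into
\[
  \int_0^T\int_{\R^+}\int_0^T\big(q(u(0+,t),v(y,s))+q(v(0+,t),u(y,s))\big)\varphi(0,t,y,s)\,\diff t\,\diff y\,\diff s,
\]
which is the last term on the right-hand side of the claim.

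The main obstacle, and the only genuinely technical step, is the estimation of the four time-slice terms, which is the standard Kuznetsov computation (cf.~\cite{kuznetsov76,holden2015front}). For the slices at $T$ one first replaces $v(y,s)$ by $v(y,T)$ at the cost of $\nu_T(v,\varepsilon_0)$ and then $v(y,T)$ by $v(x,T)$ at the cost of $\mu(v(\cdot,T),\varepsilon)$, after which the remaining mollified integral collapses to $\|u(\cdot,T)-v(\cdot,T)\|_{\Lone(\R^+)}$. Here one uses that the temporal mollifier $\omega_{\varepsilon_0}(T-\cdot)$ carries only half of its mass inside $(0,T)$ near the endpoint $T$, the missing half being supplied symmetrically by the corresponding slice coming from $\Lambda_{\varepsilon,\varepsilon_0}^+(v,u)$; the analogous statement holds at the endpoint $0$, producing $\|u_0-v(\cdot,0)\|_{\Lone(\R^+)}$ with errors $\nu_0(v,\varepsilon_0)$ and $\mu(v(\cdot,0),\varepsilon)$. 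The loss of mollifier mass near the spatial boundary $x=0$ affects a set of measure $O(\varepsilon)$ and is absorbed into $C\varepsilon$ using the boundedness of $u$ and $v$, while the widths of the two mollifiers account for the remaining $C(\varepsilon+\varepsilon_0)$. These are exactly the quantities appearing in the error term, which is why the hypotheses $v(\cdot,s)\in(\Lone\cap\BV)(\R^+)$ and the existence of the one-sided limits $v(t\pm)$ are needed.

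Collecting all contributions, the inequality $\Lambda_{\varepsilon,\varepsilon_0}^+(u,v)\ge 0$ becomes a bound in which $-\|u(\cdot,T)-v(\cdot,T)\|_{\Lone(\R^+)}$, $+\|u_0-v(\cdot,0)\|_{\Lone(\R^+)}$, the term $-\Lambda_{\varepsilon,\varepsilon_0}^+(v,u)$, the error $C(\varepsilon+\varepsilon_0+\nu_T(v,\varepsilon_0)+\nu_0(v,\varepsilon_0)+\mu(v(\cdot,T),\varepsilon)+\mu(v(\cdot,0),\varepsilon))$, and the boundary term above all appear on the right. Moving the negative $\Lone$-norm to the left yields the claimed estimate. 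I expect the time-slice estimate to be the crux; the interior cancellation and the boundary-term bookkeeping are routine consequences of the symmetry of $\varphi$.
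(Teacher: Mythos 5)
Your proposal is correct and takes essentially the route the paper relies on: the paper does not reprove this lemma but imports it verbatim from \cite{BadwaikRufconvergence} (Lem.~4.8), whose proof is exactly the Kuznetsov doubling argument you reconstruct --- the entropy inequality used for $u$ only, cancellation of the interior fourfold integrals in $\Lambda^+_{\varepsilon,\varepsilon_0}(u,v)+\Lambda^+_{\varepsilon,\varepsilon_0}(v,u)$ via the symmetries of $\varphi$, the time-slice estimates with the moduli $\nu$ and $\mu$ and the half-mass bookkeeping at $t=0,T$, and the boundary contribution at $x=0$ landing on the right-hand side with a plus sign due to the sign convention in $L^+$ (as opposed to $L^-$). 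Your sign accounting, including why the $q$-term ends up as a spurious right-hand-side term here but as a favorable left-hand-side term in \Cref{Lemma: Kuznetsov}, is accurate.
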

Note that this time the term involving $q$ is on the right-hand side of the inequality. In order to estimate this spurious term, we will rely on the estimate of the corresponding term on the left-hand side of~\eqref{eqn: almost stability on R^-} as well as on the spatial Lipschitz continuity of the flux from \Cref{lem: Lipschitz continuity in space}.

\begin{theorem}[Stability on $\R^+$]\label{thm: stability on R^+}
  Let $u$ be the entropy solution of~\eqref{def: entropy solution on R+} and $v$ the entropy solution of
  \begin{gather}
    \begin{aligned}
      v_t + \tilde{f}(v)_x = 0,& &&(x,t)\in\R^+\times(0,T),\\
      v(x,0)=v_0(x),& &&x\in\R^+\\
      v(0,t) = \tilde{f}^{-1}(\tilde{g}(v(0-,t))),& &&t\in(0,T),
    \end{aligned}
    \label{conservation law on R+ for f tilde}
  \end{gather}
  where the boundary datum is given in terms of $v(0-,t)$ from \Cref{thm: stability on R^-}.
  Then
  \begin{multline}
    \|u(\cdot,T) - v(\cdot,T)\|_{\mathrm{L}^1(\R^+)} \\
    \leq \|u_0 - v_0\|_{\mathrm{L}^1(\R)} + CT \left(\|g-\tilde{g}\|_{\mathrm{Lip}} + \|f-\tilde{f}\|_{\mathrm{Lip}} \right) \left(\TV(v_0) + \|v_0\|_{\mathrm{L}^\infty(\R)} \right).
    \label{eqn: stability on R^+}
  \end{multline}
\end{theorem}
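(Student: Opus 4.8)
The plan is to follow the proof of \Cref{thm: stability on R^-} as closely as possible, applying the Kuznetsov-type lemma \Cref{Lemma: Kuznetsov 2} with $v$ as the comparison function, and to concentrate all the new difficulty on the boundary term that now appears on the \emph{right}-hand side. First I would estimate $-\Lambda_{\varepsilon,\varepsilon_0}^+(v,u)$ verbatim as on $\R^-$: writing $h=f-\tilde f$ and using $f(k^*,0)=0$, the pointwise measure bound $|\partial_x H(v)|\leq\|f-\tilde f\|_{\mathrm{Lip}}|\partial_x v|$ yields $-\Lambda_{\varepsilon,\varepsilon_0}^+(v,u)\leq\|f-\tilde f\|_{\mathrm{Lip}}\,T\,\TV(v_0)$, while the moduli terms $\nu,\mu$ and the $\varepsilon,\varepsilon_0$ contributions are identical and vanish in the limit. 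Together with the datum term $\|u_0-v_0\|_{\mathrm{L}^1(\R^+)}$, the whole estimate then hinges on bounding from above the spurious boundary term
\[
  I^+ = \int_0^T\int_{\R^+}\int_0^T \left( q(u(0+,t),v(y,s)) + q(v(0+,t),u(y,s)) \right)\varphi(0,t,y,s)\,\diff t\,\diff y\,\diff s .
\]

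The crux, and what I expect to be the main obstacle, is to relate $I^+$ -- which lives on $\R^+$ and involves the flux $f$ and the right trace $u(0+,\cdot)$ -- back to the boundary integral already controlled on $\R^-$, which involves $g$ and the left trace $u(0-,\cdot)$. My plan has two ingredients. First, I would collapse the spatial integration onto the interface: since $\varphi(0,t,y,s)=\omega_\varepsilon(-y)\omega_{\varepsilon_0}(t-s)$ is supported in $|y|\leq\varepsilon$, the triangle inequality $|f(u(0+,t))-f(v(y,s))|\leq|f(u(0+,t))-f(v(0+,s))|+|f(v(0+,s))-f(v(y,s))|$ together with the spatial Lipschitz continuity of the flux from \Cref{lem: Lipschitz continuity in space} (applied to $v$, and to $u$ in the second summand) bounds the $y$-dependent remainder by $C\varepsilon$, which vanishes in the limit. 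Second, I would invoke the boundary conditions $f(u(0+,t))=g(u(0-,t))$ and $\tilde f(v(0+,t))=\tilde g(v(0-,t))$ so that the collapsed integrand can be rewritten in terms of the left traces. Using $f(0)=\tilde f(0)=g(0)=\tilde g(0)=0$ I obtain the pointwise splitting
\[
  |f(u(0+,t))-f(v(0+,s))| \leq |g(u(0-,t))-g(v(0-,s))| + \|g-\tilde g\|_{\mathrm{Lip}}|v(0-,s)| + \|f-\tilde f\|_{\mathrm{Lip}}|v(0+,s)|,
\]
and similarly for the second summand of $I^+$.

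The point of this rewriting is that the leading term $|g(u(0-,t))-g(v(0-,s))|$ is exactly the integrand of the nonnegative boundary integral on the left-hand side of \eqref{eqn: almost stability on R^-}: after the same $y$-collapse the two expressions coincide up to $O(\varepsilon)$, so this part of $I^+$ is bounded by that left-hand boundary integral, which by \eqref{eqn: almost stability on R^-} (discarding the nonnegative $\mathrm{L}^1$-norm there) does not exceed $\|u_0-v_0\|_{\mathrm{L}^1(\R^-)} + C(\varepsilon+\varepsilon_0) + \|g-\tilde g\|_{\mathrm{Lip}}\,T\,\TV(v_0)$. The remaining flux-difference terms are controlled using the uniform $\mathrm{L}^\infty$-bound \eqref{eqn: L infty bound of adapted entropy solutions}, $\|v\|_{\mathrm{L}^\infty}\leq C\|v_0\|_{\mathrm{L}^\infty(\R)}$, together with $\int_0^T\omega_{\varepsilon_0}(t-s)\,\diff t\leq 1$, and contribute $CT(\|f-\tilde f\|_{\mathrm{Lip}}+\|g-\tilde g\|_{\mathrm{Lip}})\|v_0\|_{\mathrm{L}^\infty(\R)}$.

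Finally I would insert the bounds on $-\Lambda_{\varepsilon,\varepsilon_0}^+$ and on $I^+$ into \Cref{Lemma: Kuznetsov 2}, send $\varepsilon,\varepsilon_0\to 0$, and combine $\|u_0-v_0\|_{\mathrm{L}^1(\R^+)}+\|u_0-v_0\|_{\mathrm{L}^1(\R^-)}=\|u_0-v_0\|_{\mathrm{L}^1(\R)}$ to reach \eqref{eqn: stability on R^+}. The one delicate point to verify carefully is the $y$-collapse matching between the $\R^+$ and $\R^-$ boundary integrals, since these are integrated over different half-lines and only their traces at the interface agree; the spatial Lipschitz continuity of the flux is precisely what makes this matching legitimate with an error that disappears in the limit.
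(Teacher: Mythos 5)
Your proposal is correct and follows essentially the same route as the paper's proof: the same Kuznetsov-type lemma and $-\Lambda^+_{\varepsilon,\varepsilon_0}$ estimate, the Rankine--Hugoniot coupling at $x=0$, reuse of the nonnegative boundary integral left over on the left-hand side of \eqref{eqn: almost stability on R^-}, the spatial Lipschitz continuity of \Cref{lem: Lipschitz continuity in space} for the interface collapse, and the $\mathrm{L}^\infty$ bound together with $f(0)=\tilde{f}(0)=0$ for the flux-difference terms. One small repair: \Cref{lem: Lipschitz continuity in space} applied to $v$ controls $\tilde{f}(v(\cdot,s))$ rather than $f(v(\cdot,s))$, so your collapse term $|f(v(0+,s))-f(v(y,s))|$ requires the additional splitting $|f(v(0+,s))-f(v(y,s))|\leq|\tilde{f}(v(0+,s))-\tilde{f}(v(y,s))|+\|f-\tilde{f}\|_{\mathrm{Lip}}\,|v(0+,s)-v(y,s)|$, whose second summand is absorbed into the final bound via the TV bound on $v$; the paper sidesteps this by splitting through $\tilde{f}(v(0+,s))$ (its term $\mathbf{A_2}$) from the start.
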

\begin{proof}
  We use the Kuznetsov-type lemma \ref{Lemma: Kuznetsov 2}. Due to the Lipschitz continuity in time and the TVD property (for the TVD property of conservation laws on bounded domains see \cite[Lem. 2]{Ridder2019}) the entropy solution $v$ of~\eqref{conservation law on R+ for f tilde} satisfies
  \begin{align*}
    \nu_0(v,\varepsilon_0),\,\nu_T(v,\varepsilon_0) &\leq \|\tilde{f}\|_{\mathrm{Lip}}\TV(v_0)\varepsilon_0,\\
    \mu(v(\cdot,0),\varepsilon),~\mu(v(\cdot,T),\varepsilon) &\leq \TV(v_0) \varepsilon.
  \end{align*}
  In order to estimate $-\Lambda_{\varepsilon,\varepsilon_0}^+(v,u)$, we can follow the same steps as in the proof of \Cref{thm: stability on R^-} to obtain
  \begin{equation*}
    -\Lambda_{\varepsilon,\varepsilon_0}^+(v,u) \leq \|f-\tilde{f}\|_{\mathrm{Lip}}\cdot T\cdot\TV(v_0).
  \end{equation*}
  It remains to estimate the term
  \begin{align*}
    \int_0^T & \int_{\R^+}\int_0^T \left( q(u(0+,t),v(y,s)) + q(v(0+,t),u(y,s)) \right)\varphi(0,t,y,s)\diff t\diff y\diff s \\
    &=\int_0^T\int_{\R^+}\int_0^T\left( \underbrace{|f(u(0+,t))- f(v(y,s))|}_{\eqqcolon \mathbf{A}} + \underbrace{|f(v(0+,t) - f(u(y,s))|}_{\eqqcolon \mathbf{B}} \right)\varphi(0,t,y,s)\diff t\diff y\diff s.
  \end{align*}
  \paragraph{Ad $\mathbf{A}$:}
  Here, we split
  \begin{equation*}
  	\mathbf{A} \leq \underbrace{|f(u(0+,t)) - \tilde{f}(v(0+,s))|}_{\eqqcolon\mathbf{A_1}} + \underbrace{|\tilde{f}(v(0+,s)) - \tilde{f}(v(y,s))|}_{\eqqcolon\mathbf{A_2}} + \underbrace{|\tilde{f}(v(y,s)) - f(v(y,s))|}_{\eqqcolon\mathbf{A_3}}
  \end{equation*}
  Regarding $\mathbf{A_2}$ we use the Lipschitz continuity of $\tilde{f}(v)$ in space from \Cref{lem: Lipschitz continuity in space} to get
  \begin{align*}
  	\int_0^T\int_{\R^+}\int_0^T \mathbf{A_2} \varphi(0,t,y,s) \diff t\diff y\diff s &= \int_0^T\int_{\R^+}\int_0^T |\tilde{f}(v(0+,s)) - \tilde{f}(v(y,s))| \varphi(0,t,y,s) \diff t\diff y\diff s\\
  	&\leq \frac{1}{\epsilon} \int_0^\varepsilon \underbrace{\int_0^T  |\tilde{f}(v(0+,s)) - \tilde{f}(v(y,s))| \diff s}_{\leq C|y|} \diff y\\
  	&\leq \frac{C}{\varepsilon} \int_0^\varepsilon |y|\diff y\\
  	&=C\varepsilon.
  \end{align*}
  In order to estimate the term $\mathbf{A_3}$ we use the $\mathrm{L}^\infty$ bound~\eqref{eqn: L infty bound of adapted entropy solutions} and the fact that $f(0)=\tilde{f}(0)$ to get
  \begin{align*}
  	\int_0^T\int_{\R^+}\int_0^T \mathbf{A_3} \varphi(0,t,y,s) \diff t\diff y\diff s &= \int_0^T\int_{\R^+}\int_0^T |\tilde{f}(v(y,s)) - f(v(y,s))| \varphi(0,t,y,s) \diff t\diff y\diff s\\
  	&= \int_0^T\int_{\R^+}\int_0^T |(\tilde{f}-f)(v(y,s)) - (\tilde{f}-f)(0)| \varphi(0,t,y,s) \diff t\diff y\diff s\\
  	&\leq \|f-\tilde{f}\|_{\mathrm{Lip}} \int_0^T\int_{\R^+} |v(y,s)|\omega_\varepsilon(-y)\diff y\diff s\\
  	&\leq \|f-\tilde{f}\|_{\mathrm{Lip}} C T \|v\|_{\mathrm{L}^\infty(\R^+\times (0,T))}\\
  	&\leq \|f-\tilde{f}\|_{\mathrm{Lip}} C T \|v_0\|_{\mathrm{L}^\infty(\R^+)}
  \end{align*}
  For the term $\mathbf{A_1}$ we use the Rankine--Hugoniot condition to cross the interface at $x=0$ such that we can utilize the bound for the integral term on the left-hand side of the stability estimate on $\R^-$ (cf.~\eqref{eqn: almost stability on R^-}). To that end, we estimate
  \begin{align*}
  	\mathbf{A_1} &= |f(u(0+,t)) - \tilde{f}(v(0+,s))|\\
  	&= |g(u(0-,t)) - \tilde{g}(v(0-,s))|\\
  	&\leq |g(u(0-,t)) - g(v(y,s))| + |g(v(y,s)) - \tilde{g}(v(y,s))| + |\tilde{g}(v(y,s)) - \tilde{g}(v(0-,s))|.
  \end{align*}
  Since $\mathbf{A_1}$ is independent of $y$, we can use the symmetry of $\varphi$ with respect to $y$ to obtain
  \begin{align*}
  	\int_0^T & \int_{\R^+}\int_0^T \mathbf{A_1} \varphi(0,t,y,s)\diff t\diff y\diff s\\
  	&=\int_0^T \int_{\R^-}\int_0^T \mathbf{A_1} \varphi(0,t,y,s)\diff t\diff y\diff s\\
  	&\leq \int_0^T \int_{\R^-}\int_0^T |g(u(0-,t)) - g(v(y,s))| \varphi(0,t,y,s)\diff t\diff y\diff s\\
  	&\phantom{\mathrel{=}} \int_0^T \int_{\R^-}\int_0^T |g(v(y,s)) - \tilde{g}(v(y,s))| \varphi(0,t,y,s)\diff t\diff y\diff s\\
  	&\phantom{\mathrel{=}} \int_0^T \int_{\R^-}\int_0^T |\tilde{g}(v(y,s)) - \tilde{g}(v(0-,s))| \varphi(0,t,y,s)\diff t\diff y\diff s.
  \end{align*}
  Note that the first integral in this estimate is bounded by the right-hand side of~\eqref{eqn: almost stability on R^-} such that we have
  \begin{multline*}
  	\int_0^T \int_{\R^-}\int_0^T |g(u(0-,t)) - g(v(y,s))| \varphi(0,t,y,s)\diff t\diff y\diff s\\
  	\leq \|u_0 - v_0\|_{\mathrm{L}^1(\R^-)} + C(\varepsilon + \varepsilon_0) + \|g-\tilde{g}\|_{\mathrm{Lip}}\cdot T\cdot \TV(v_0).
  \end{multline*}
  Using the same techniques as for the term $\mathbf{A_3}$, the second integral can be estimated by
  \begin{equation*}
  	\|g-\tilde{g}\|_{\mathrm{Lip}} C T \|v_0\|_{\mathrm{L}^\infty(\R^-)}.
  \end{equation*}
  The third integral is bounded by $C\varepsilon$ which can be seen by employing the Lipschitz continuity of $\tilde{g}(v)$ in space just as for the term $\mathbf{A_2}$.
  In summary, we have
  \begin{multline*}
  	\int_0^T\int_{\R^+}\int_0^T \mathbf{A}\varphi(0,t,y,s) \diff t\diff y\diff s \\
  	\leq \|u_0 - v_0\|_{\mathrm{L}^1(\R^-)} + C(\varepsilon + \varepsilon_0) + \|g-\tilde{g}\|_{\mathrm{Lip}}\cdot T\cdot \TV(v_0) \\
  	+ ( \|f-\tilde{f}\|_{\mathrm{Lip}} + \|g-\tilde{g}\|_{\mathrm{Lip}}) C T \|v_0\|_{\mathrm{L}^\infty(\R)}
  \end{multline*}
  \paragraph{Ad $\mathbf{B}$:} It remains to estimate the term $\mathbf{B}$ which we split as follows:
  \begin{align*}
  	\mathbf{B} &= |f(v(0+,t)) - f(u(y,s))| \\
  	&\leq |f(v(0+,t)) - \tilde{f}(v(0+,t))| + |\tilde{f}(v(0+,t)) - f(u(0+,s))| + |f(u(0+,s)) - f(u(y,s))|.
  \end{align*}
  From here, we can apply the same steps, \emph{mutatis mutandis}, as for the term $\mathbf{A}$ and we ultimately get
  \begin{multline*}
  	\int_0^T\int_{\R^+}\int_0^T \mathbf{B}\varphi(0,t,y,s) \diff t\diff y\diff s \\
  	\leq \|u_0 - v_0\|_{\mathrm{L}^1(\R^-)} + C(\varepsilon + \varepsilon_0) + \|g-\tilde{g}\|_{\mathrm{Lip}}\cdot T\cdot \TV(v_0) \\
  	+ ( \|f-\tilde{f}\|_{\mathrm{Lip}} + \|g-\tilde{g}\|_{\mathrm{Lip}}) C T \|v_0\|_{\mathrm{L}^\infty(\R)}
  \end{multline*}
  Combining all the bounds and passing to the limit $\varepsilon,\varepsilon_0\to 0$ yields the desired estimate.
\end{proof}

\subsection{Stability estimates on $(0,L)$}

By restricting the solutions $u$ and $v$ to a bounded interval $(0,L)$ Theorems \ref{thm: stability on R^+} and~\ref{thm: stability on R^-} yield a stability estimate on $(0,L)$ as well.

\begin{corollary}[Stability on $(0,L)$]\label{cor: stability on bounded domain}
  Let $u$ be the entropy solution of~\eqref{conservation law on R+} on the bounded interval $(0,L)$ and $v$ the entropy solution of
  \begin{gather*}
    \begin{aligned}
      v_t + \tilde{f}(v)_x = 0,& &&(x,t)\in(0,L)\times(0,T),\\
      v(x,0)=v_0(x),& &&x\in(0,L)\\
      v(0,t) = \tilde{f}^{-1}(\tilde{g}(v(0-,t))),& &&t\in(0,T),
    \end{aligned}
  \end{gather*}
  where the boundary datum is given in terms of $v(0-,t)$ from \Cref{thm: stability on R^-}.
  Then
  \begin{multline}
    \|u(\cdot,T) - v(\cdot,T)\|_{\mathrm{L}^1(0,L)} \\
    \leq \|u_0 - v_0\|_{\mathrm{L}^1(-\infty,L)} + CT \left(\|g-\tilde{g}\|_{\mathrm{Lip}} + \|f-\tilde{f}\|_{\mathrm{Lip}} \right) \left(\TV(v_0) + \|v_0\|_{\mathrm{L}^\infty(\R)} \right).
    \label{eqn: stability on R^+}
  \end{multline}
\end{corollary}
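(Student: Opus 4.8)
The plan is to rerun the argument of \Cref{thm: stability on R^+} essentially verbatim, but on the bounded spatial domain $(0,L)$ in place of $\R^+$. First I would record the Kuznetsov-type lemma adapted to $(0,L)$: one defines $L^{(0,L)}(u,c,\varphi)$ exactly as $L^+$ but with the spatial integral running over $(0,L)$ and with \emph{two} boundary contributions, namely $+\int_0^T q(u(0+,t),c)\varphi(0,t)\diff t$ at the inflow boundary $x=0$ and $-\int_0^T q(u(L-,t),c)\varphi(L,t)\diff t$ at the outflow boundary $x=L$. The doubling-of-variables estimate then reads exactly like \Cref{Lemma: Kuznetsov 2}, except that the left-hand side now also carries the extra boundary term
\[
  \int_0^T\int_0^L\int_0^T\left( q(u(L-,t),v(y,s)) + q(v(L-,t),u(y,s)) \right)\varphi(L,t,y,s)\diff t\diff y\diff s
\]
coming from $x=L$, in complete analogy with the term at $x=0$ in \Cref{Lemma: Kuznetsov}. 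The derivation of this lemma is identical to that of \Cref{Lemma: Kuznetsov 2}, the only change being the additional boundary integral produced when integrating by parts at $x=L$. The moduli-of-continuity bounds $\nu_0(v,\varepsilon_0),\nu_T(v,\varepsilon_0)\leq\|\tilde f\|_{\mathrm{Lip}}\TV(v_0)\varepsilon_0$ and $\mu(v(\cdot,0),\varepsilon),\mu(v(\cdot,T),\varepsilon)\leq\TV(v_0)\varepsilon$ continue to hold, since the TVD property and the Lipschitz-in-time estimate are available on bounded domains as well (see \cite[Lem. 2]{Ridder2019}).

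Second, the bulk term $-\Lambda^{(0,L)}_{\varepsilon,\varepsilon_0}(v,u)$ is bounded by $\|f-\tilde f\|_{\mathrm{Lip}}\cdot T\cdot\TV(v_0)$ by precisely the computation already carried out in \Cref{thm: stability on R^-,thm: stability on R^+}; that estimate only invokes the spatial total variation of $v$ and the Lipschitz constant of $g-\tilde g$, and is therefore insensitive to the choice of spatial domain. The inflow boundary term at $x=0$ is literally the same object treated under the headings $\mathbf{A}$ and $\mathbf{B}$ in the proof of \Cref{thm: stability on R^+}, so its estimate is unchanged: it produces the contributions $\|u_0-v_0\|_{\Lone(\R^-)}$, $\|g-\tilde g\|_{\mathrm{Lip}}\cdot T\cdot\TV(v_0)$, and $(\|f-\tilde f\|_{\mathrm{Lip}}+\|g-\tilde g\|_{\mathrm{Lip}})CT\|v_0\|_{\Linfty(\R)}$, where the $\R^-$ data enters through \Cref{thm: stability on R^-} via the Rankine--Hugoniot relation $f(u(0+,t))=g(u(0-,t))$ together with the spatial Lipschitz continuity of the flux supplied by \Cref{lem: Lipschitz continuity in space}.

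Third, and this is the only genuinely new ingredient, I would simply discard the outflow boundary term at $x=L$. Because $q(a,b)=|f(a)-f(b)|\geq 0$, that boundary term is nonnegative, and since it sits on the left-hand side of the bounded-domain Kuznetsov inequality it may be dropped, bounding the left-hand side from below by $\|u(\cdot,T)-v(\cdot,T)\|_{\Lone(0,L)}$. This is exactly the mechanism by which the boundary term at $x=0$ was eliminated in the proof of \Cref{thm: stability on R^-}. Collecting the initial-data pieces $\|u_0-v_0\|_{\Lone(0,L)}$ from the Kuznetsov right-hand side and $\|u_0-v_0\|_{\Lone(\R^-)}$ coming out of the $x=0$ term yields $\|u_0-v_0\|_{\Lone(-\infty,L)}$, and passing to the limit $\varepsilon,\varepsilon_0\to 0$ gives the claimed estimate.

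I expect the main obstacle to be essentially bookkeeping: confirming that the doubling-of-variables argument really does deposit the $x=L$ contribution on the favorable (left-hand) side with the correct sign. The conceptual reason is that the strict monotonicity $f_u\geq\alpha>0$ forces every characteristic, and hence every wave, to travel rightward, so that $x=L$ is a pure outflow boundary and plays exactly the role that $x=0$ played in the $\R^-$ setting of \Cref{thm: stability on R^-}. Making this precise requires only the sign of $q$ and the orientation of the boundary integrals in the adapted Kuznetsov lemma, and no new estimate beyond those already established for $\R^-$ and $\R^+$.
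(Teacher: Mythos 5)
Your proposal is correct and follows essentially the same route as the paper: adjust the Kuznetsov machinery to $(0,L)$ by adding the boundary term $-\int_0^T q(u(L-,t),c)\varphi(L,t)\diff t$ at $x=L$, rerun the bulk and $x=0$ estimates of Theorems~\ref{thm: stability on R^-} and~\ref{thm: stability on R^+} verbatim, and discard the resulting $x=L$ contribution on the left-hand side since $q(a,b)=|f(a)-f(b)|\geq 0$ by monotonicity. The only blemish is a slip where you attribute the bulk-term bound to the Lipschitz constant of $g-\tilde{g}$ rather than $f-\tilde{f}$ (the flux on $(0,L)$), but the bound you state is the correct one.
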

\begin{proof}
  Without repeating all calculations of Sections~\ref{subsec: stability on R-} and~\ref{subsec: stability on R+} we will highlight the adjustments to the respective proofs that need to be done. If we consider solutions on $(0,L)$ instead of $\R^+$ the definition of $\Lambda_{\varepsilon,\varepsilon_0}^{(0,L)}(u,v)$ in~\eqref{eqn: definition Lambda^+} needs to be adjusted so that $\Lambda_{\varepsilon,\varepsilon_0}^{(0,L)}(u,v)$ contains the term
  \begin{equation*}
    -\int_0^T\int_0^L\int_0^T q(u(L-,t),v(y,s))\varphi(L,t,y,s) \diff t\diff y\diff s
  \end{equation*}
  and all instances of $\R^+$ need to be changed to $(0,L)$. 
  Following the relevant techniques from the proofs of Theorems~\ref{thm: stability on R^-} and~\ref{thm: stability on R^+} in the same way ultimately yields
  \begin{multline}
    \|u(\cdot,T) - u_\Dt(\cdot,T)\|_{\mathrm{L}^1(0,L)} \\
  	+ \int_0^T\int_0^L\int_0^T\left( q(u(L,t),u_\Dt(y,s)) + q(v(L-,t),u(y,s)) \right)\varphi(L,t,y,s)\diff t\diff y\diff s\\
    \leq \|u_0 - v_0\|_{\mathrm{L}^1(-\infty,L)} + CT \left(\|g-\tilde{g}\|_{\mathrm{Lip}} + \|f-\tilde{f}\|_{\mathrm{Lip}} \right) \left(\TV(v_0) + \|v_0\|_{\mathrm{L}^\infty(\R)} \right).
    \label{Almost convergence rate on (0,L)}
  \end{multline}
  Using the monotonicity of $f$ we find
  \begin{equation*}
    q(u,v) = |f(u)-f(v)|\geq 0
  \end{equation*}
  and thus the integral term in~\eqref{Almost convergence rate on (0,L)} is nonnegative which concludes the proof.
\end{proof}

\section{Statement and proof of the main theorem}\label{sec: main stability result}
Our main result now reads as follows.


\begin{theorem}[Stability for conservation laws with discontinuous flux]\label{thm: main stability estimate}
  Let $f,\tilde{f}\in\mathcal{C}^2(\R^2;\R)$ be strictly monotone in $u$, i.e., $f_u\geq \alpha>0$ and $\tilde{f}_u\geq\tilde{\alpha}>0$, let $k$ and $\tilde{k}$ be piecewise constant, each having finitely many discontinuities, and $u_0,v_0\in\left(\mathrm{L}^1\cap\BV\right)(\R)$. Let $u$ and $v$ be entropy solutions of~\eqref{conservation law} in the sense of \Cref{def: entropy solution} with fluxes $f(k(\cdot),\cdot)$ and $\tilde{f}(\tilde{k}(\cdot),\cdot)$ and initial data $u_0$ and $v_0$, respectively. Then
  \begin{equation*}
    \|u(\cdot,T) - v(\cdot,T)\|_{\mathrm{L}^1(\R)} \leq C\left(\|u_0 - v_0\|_{\mathrm{L}^1(\R)} + \|k-\tilde{k}\|_{\mathrm{L}^\infty(\R)} + \max_{x\in\R} \|f(k(x),\cdot) - \tilde{f}(\tilde{k}(x),\cdot)\|_{\mathrm{Lip}}\right)
  \end{equation*}
  where the constant $C$ depends on $u_0,v_0$, the Lipschitz constants of $f$ and $\tilde{f}$ and the number of discontinuities in $k$ and $\tilde{k}$.
\end{theorem}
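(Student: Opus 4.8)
The plan is to reduce the statement to the single-interface estimates of \Cref{thm: stability on R^-,thm: stability on R^+,cor: stability on bounded domain} by means of the subdomain decomposition $u=\sum_{i=0}^N u^{(i)}$, $v=\sum_{i=0}^N v^{(i)}$ introduced after \Cref{thm: existence and uniqueness of entropy solutions}. Following the roadmap of the introduction, I would argue in two stages: first prove stability in the flux function for a \emph{fixed} coefficient, and then upgrade to joint stability in $f$ and $k$.

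\emph{Stage 1 (matching coefficient $k=\tilde k$).} Write the common discontinuities as $\xi_1<\cdots<\xi_N$. On $D_0=(-\infty,\xi_1)$ I translate $\xi_1$ to the origin -- admissible since \eqref{conservation law} is translation invariant -- and apply \Cref{thm: stability on R^-} to $u^{(0)},v^{(0)}$; on each bounded cell $D_i=(\xi_i,\xi_{i+1})$, $1\le i\le N-1$, I translate $\xi_i$ to $0$ and apply \Cref{cor: stability on bounded domain}; on $D_N=(\xi_N,\infty)$ I apply \Cref{thm: stability on R^+}. The coupling mechanism is already present in the proof of \Cref{thm: stability on R^+}: the spurious interface term $\int\!\int\!\int q\,\varphi$ that appears on the right-hand side at the interface $\xi_i$ is absorbed by the corresponding \emph{nonnegative} left-hand side term inherited from $D_{i-1}$, using the Rankine--Hugoniot condition \eqref{continuous Rankine--Hugoniot condition} to cross $\xi_i$ together with the spatial Lipschitz bound of \Cref{lem: Lipschitz continuity in space}. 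Summing over $i=0,\dots,N$ makes the interface contributions telescope and yields
\[
  \|u(\cdot,T)-v(\cdot,T)\|_{\mathrm{L}^1(\R)}\le C\Big(\|u_0-v_0\|_{\mathrm{L}^1(\R)}+\max_{0\le i\le N}\|f^{(i)}-\tilde f^{(i)}\|_{\mathrm{Lip}}\Big),
\]
with $C$ absorbing the factors $T$, $\TV(v_0)$ and $\|v_0\|_{\mathrm{L}^\infty}$ supplied by \Cref{thm: existence and uniqueness of entropy solutions} and depending on $N$.

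\emph{Stage 2 (general $k,\tilde k$).} I introduce the intermediate entropy solution $w$ of \eqref{conservation law} with flux $\tilde f(k(\cdot),\cdot)$ and initial datum $v_0$, and split $\|u-v\|_{\mathrm{L}^1}\le\|u-w\|_{\mathrm{L}^1}+\|w-v\|_{\mathrm{L}^1}$. The pair $(u,w)$ shares the coefficient $k$, so Stage 1 applies directly and bounds $\|u-w\|_{\mathrm{L}^1}$ by $C(\|u_0-v_0\|_{\mathrm{L}^1}+\max_i\|f(k_i,\cdot)-\tilde f(k_i,\cdot)\|_{\mathrm{Lip}})$, where $k_i$ denotes the constant value of $k$ on the $i$-th cell. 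For $(w,v)$ I pass to the common refinement of the discontinuity sets of $k$ and $\tilde k$; on each cell both coefficients are constant with values $k_i,\tilde k_i$, the two fluxes belong to the same family $\tilde f$, and Stage 1 applies again -- now with vanishing initial-data mismatch and per-cell flux difference $\|\tilde f(k_i,\cdot)-\tilde f(\tilde k_i,\cdot)\|_{\mathrm{Lip}}\le C|k_i-\tilde k_i|\le C\|k-\tilde k\|_{\mathrm{L}^\infty(\R)}$, the first inequality being the uniform Lipschitz dependence of $\tilde f\in\mathcal C^2$ on its first argument. Finally, the same $\mathcal C^2$-estimate turns $\|f(k_i,\cdot)-\tilde f(k_i,\cdot)\|_{\mathrm{Lip}}$ into $\|f(k_i,\cdot)-\tilde f(\tilde k_i,\cdot)\|_{\mathrm{Lip}}+C\|k-\tilde k\|_{\mathrm{L}^\infty(\R)}\le\max_{x}\|f(k(x),\cdot)-\tilde f(\tilde k(x),\cdot)\|_{\mathrm{Lip}}+C\|k-\tilde k\|_{\mathrm{L}^\infty(\R)}$, and collecting the two stages gives the claimed estimate.

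\emph{Main obstacle.} The delicate part is the bookkeeping of Stage 1. In contrast to the clean single-interface theorems, here every cell simultaneously \emph{receives} boundary data from its left neighbour and \emph{emits} a spurious interface term to its right, and one must check that these match exactly so that the nonnegative left-hand side term of one cell absorbs the right-hand side term of the next, and that the resulting constant is genuinely controlled by the number of discontinuities (the amplification $\tilde f^{-1}\!\circ f$ incurred at each interface is bounded thanks to $\alpha\le f_u$, but tracking its compounding over the $N$ cells requires care). Everything else reduces to triangle inequalities and the uniform $\mathcal C^2$-control of $f,\tilde f$ in their first argument.
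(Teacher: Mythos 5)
Your proposal is correct and follows essentially the same route as the paper: a first step proving flux-stability for a fixed coefficient by applying \Cref{thm: stability on R^-}, \Cref{cor: stability on bounded domain}, and \Cref{thm: stability on R^+} cell by cell, a second step handling the coefficient via the common refinement of the discontinuity sets and the $\mathcal{C}^2$/mean-value bound $\|f(k_i,\cdot)-f(\tilde k_i,\cdot)\|_{\mathrm{Lip}}\leq C|k_i-\tilde k_i|$, and a final triangle inequality through an intermediate solution. The only (immaterial) difference is your choice of intermediate solution, $\tilde f(k(\cdot),\cdot)$ with datum $v_0$, where the paper uses $f(\tilde k(\cdot),\cdot)$ with datum $u_0$; the paper additionally spells out the point you gloss over, namely that the cell-decomposition argument remains valid on the refined partition because traces exist and satisfy the Rankine--Hugoniot condition at points where the coefficient is continuous.
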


\begin{proof}
  We proceed in three steps. First we keep the coefficient $k$ the same and show stability with respect to changes in $f$ using the results previously derived. Then we show stability in the coefficient $k$ and lastly we combine both estimates.

  \emph{First step:} Let $u$ and $v$ be entropy solutions of~\eqref{conservation law} with fluxes $f(k(\cdot),\cdot)$ and $\tilde{f}(k(\cdot),\cdot)$ and initial data $u_0$ and $v_0$, respectively. As seen before, we decompose the entropy solution $u$ as $u=\sum_{i=0}^N u^{(i)}$, where $u^{(i)}$ are the respective entropy solutions on $D_i$, i.e., solutions of~\eqref{conservation law on D0} and~\eqref{conservation law on Di}, respectively. We decompose $v$ in the same manner. Then, we have
  \begin{equation*}
    \|u(\cdot,T) - v(\cdot,T)\|_{\mathrm{L}^1(\R)} = \sum_{i=0}^{N} \|u^{(i)}(\cdot,T) - v^{(i)}(\cdot,T)\|_{\mathrm{L}^1(D_i)}.
  \end{equation*}
  By applying \Cref{thm: stability on R^-} for $D_0$, \Cref{cor: stability on bounded domain} iteratively for each $D_i$, $i=1,\ldots,N-1$, and finally \Cref{thm: stability on R^+} for $D_N$ we obtain
  \begin{equation}
    \|u(\cdot,T) - v(\cdot,T)\|_{\mathrm{L}^1(\R)} \leq C\left(\|u_0 - v_0\|_{\mathrm{L}^1(\R)} + \max_{i=0,\ldots,N} \|f^{(i)} - \tilde{f}^{(i)}\|_{\mathrm{Lip}}\right).
    \label{eqn: stability in the flux only}
  \end{equation}

  \emph{Second step:} Let now $k$ and $\tilde{k}$ be piecewise constant functions each with finitely many discontinuities. Denote by $(\zeta_1,\ldots,\zeta_M)$ the smallest ordered set containing the discontinuities of both $k$ and $\tilde{k}$ and let
  \begin{equation*}
    k_i = k(x),\qquad \tilde{k}_i = \tilde{k}(x)\qquad \text{for }x\in (\zeta_i,\zeta_{i+1}),\ i=0,\ldots,M
  \end{equation*}
  where we have used the notation $\zeta_0=-\infty$ and $\zeta_{M+1}=\infty$. Then we define the sequences of fluxes
  \begin{align*}
    \begin{aligned}
    f^{(i)} &\coloneqq f(k_i,\cdot) = f(k(x),\cdot)\\
    \tilde{f}^{(i)} &\coloneqq f(\tilde{k}_i,\cdot) = f(\tilde{k}(x),\cdot)
    \end{aligned}
    \qquad\text{for }x\in (\zeta_i,\zeta_{i+1}),\ i=0,\ldots,M.
  \end{align*}
  Note that for given $i$ the fluxes $f^{(i)}$ and $f^{(i+1)}$ (or $\tilde{f}^{(i)}$ and $\tilde{f}^{(i+1)}$) might be identical since $\zeta_{i+1}$ might only be a discontinuity of one coefficient of $\{k,\tilde{k}\}$ and not both.

  In the first step we showed stability with respect to changes in $f$ by decomposing the spatial domain into finitely many intervals $(D_i)_{i=0}^N$ with endpoints given by the discontinuities of $k$. We will now argue that the same results holds for any decomposition of the spatial domain into finitely many intervals $(I_i)_{i=0}^M$ as long as the discontinuities of $k$ are among the endpoints of the intervals $I_i$. Since an entropy solution has bounded variation its traces at each point where $k$ is continuous exist and the traces satisfy the Rankine--Hugoniot condition which is enough to carry out the same arguments as in \Cref{sec: Stability with one discontinuity}.

  Now let $u$ and $w$ be the entropy solutions of~\eqref{conservation law} with fluxes $f(k(\cdot),\cdot)$ and $f(\tilde{k}(\cdot),\cdot)$, respectively, and with the same initial datum $u_0$. Then, using~\eqref{eqn: stability in the flux only} and the mean value theorem, we obtain
  \begin{align*}
    \|u(\cdot,T) - w(\cdot,T)\|_{\mathrm{L}^1(\R)} &\leq C \max_{i=0,\ldots,M} \|f^{(i)} - \tilde{f}^{(i)}\|_{\mathrm{Lip}}\\
    &\leq C \max_{i=0,\ldots,M} \sup_u |f_u(k_i,u) - f_u(\tilde{k}_i,u)|\\
    &= C \max_{i=0,\ldots,M} \sup_u |f_{uk}(\xi,u)| |k_i-\tilde{k}_i|\\
    &\leq C\max_{i=0,\ldots,M} |k_i - \tilde{k}_i|\\
    &= C\|k-\tilde{k}\|_{\mathrm{L}^\infty(\R)}.
  \end{align*}
  Here, in order to bound the term $\sup_u |f_{uk}(\xi,u)|$, we have used the fact that $f\in\mathcal{C}^2$ and the values of $u$ and $\xi$ are in 
  \begin{equation*}
  [-C\|u_0\|_{\mathrm{L}^\infty(\R)},C\|u_0\|_{\mathrm{L}^\infty(\R)}] \qquad\text{and}\qquad
  [-\max(\|k\|_{\mathrm{L}^\infty(\R)},\|\tilde{k}\|_{\mathrm{L}^\infty(\R)}), \max(\|k\|_{\mathrm{L}^\infty(\R)},\|\tilde{k}\|_{\mathrm{L}^\infty(\R)})].
  \end{equation*}

  \emph{Third step:} Finally, let $u$ and $v$ be the entropy solutions of~\eqref{conservation law} with fluxes $f(k(\cdot),\cdot)$ and $\tilde{f}(\tilde{k}(\cdot),\cdot)$ and initial data $u_0$ and $v_0$, respectively. Let further $w$ be the entropy solution of~\eqref{conservation law} with flux $f(\tilde{k}(\cdot),\cdot)$ and initial datum $u_0$. Then, using the triangle inequality and steps one and two, we obtain
  \begin{gather}
  \begin{aligned}
    \|u(\cdot,T) - v(\cdot,T)\|_{\mathrm{L}^1(\R)} &\leq \|u(\cdot,T) - w(\cdot,T)\|_{\mathrm{L}^1(\R)} + \|w(\cdot,T) - v(\cdot,T)\|_{\mathrm{L}^1(\R)}\\
    &\leq C \left(\|u_0 - v_0\|_{\mathrm{L}^1(\R)} + \|k-\tilde{k}\|_{\mathrm{L}^\infty(\R)} + \max_{i=0,\ldots,N} \|f^{(i)} - \tilde{f}^{(i)}\|_{\mathrm{Lip}} \right).
  \end{aligned}
  \label{eqn: almost stability}
  \end{gather}
  Like in step two, we have
  \begin{align*}
    \max_{i=0,\ldots,N} \|f^{(i)} - \tilde{f}^{(i)}\|_{\mathrm{Lip}} &= \max_{x\in\R} \|f(\tilde{k}(x),\cdot) - \tilde{f}(\tilde{k}(x),\cdot)\|_{\mathrm{Lip}}\\
    &\leq \max_{x\in\R} \left( \|f(\tilde{k}(x),\cdot) - f(k(x),\cdot)\|_{\mathrm{Lip}} + \|f(k(x),\cdot) - \tilde{f}(\tilde{k}(x),\cdot)\|_{\mathrm{Lip}} \right)\\
    &\leq C \|k-\tilde{k}\|_{\mathrm{L}^\infty(\R)} + \max_{x\in\R} \|f(k(x),\cdot) - \tilde{f}(\tilde{k}(x),\cdot)\|_{\mathrm{Lip}}
  \end{align*}
  which together with~\eqref{eqn: almost stability} yields the desired stability result.
\end{proof}

\begin{remark}
	We want to mention that the stability result of \Cref{thm: main stability estimate} can not only be used to prove a convergence rate for the front tracking method as in the following section, but also in the context of uncertainty quantification for~\eqref{conservation law} where the model parameters $u_0$, $k$, and $f$ are subject to randomness \cite{badwaik2019multilevel}. In that case, the stability result is integral in showing existence and uniqueness of random entropy solutions.
	The stability result can also be utilized to show well-posedness of Bayesian inverse problems for conservation laws with discontinuous flux \cite{MishraBayesian}.
\end{remark}

\section{Convergence rate estimates for the front tracking method}\label{sec: Convergence rate front tracking}
The front tracking method consists of approximating the initial datum by a piecewise constant function, the flux by a piecewise linear function, and solving the resulting approximated conservation law exactly. To that end, the algorithm needs to solve a series of Riemann problems across discontinuities of $k$. Therefore, we first introduce the solution of such a Riemann problem before we present the front tracking method and prove its convergence rate in the following subsections.

\subsection{The Riemann solver}
Let us consider the Riemann problem for~\eqref{conservation law} first. We assume that the discontinuities in $k$ and $u_0$ coincide since at the points where $x\mapsto f(k(x),u)$ is continuous, the Riemann problems are the usual Riemann problems for scalar conservation laws. The initial datum and the parameter $k$ are piecewise constant functions defined by
\begin{equation}
  u_0(x) = \begin{cases}
    u_L,& x<0,\\
    u_R,& x>0,
  \end{cases}
  \qquad k(x) = \begin{cases}
    k_L,& x<0,\\
    k_R,& x>0,
  \end{cases}
  \label{eqn: data for Riemann problem}
\end{equation}
such that the flux can be written as
\begin{equation*}
  f(k(x),u) = \begin{cases}
    g(u),& x<0,\\
    f(u),& x>0,
  \end{cases}
\end{equation*}
for some strictly increasing functions $f,g$. The solution of the Riemann problem now is an entropy solution of
\begin{gather}
\begin{aligned}
  u_t + g(u)_x = 0,& &&(x,t)\in\R^-\times(0,T),\\
  u_t + f(u)_x = 0,& &&(x,t)\in\R^+\times(0,T),\\
  u(x,0)=\begin{cases}
  	u_L,& x<0,\\
  	u_R,& x>0,
  \end{cases}& && x\in\R.
\end{aligned}
\label{eqn: Riemann problem}
\end{gather}
\begin{proposition}
	Consider the Riemann problem~\eqref{eqn: Riemann problem} and let $u^* \coloneqq f^{-1}(g(u_L))$.
	Then the unique entropy solution of the Riemann problem~\eqref{eqn: Riemann problem} is
	\begin{equation*}
	u(x,t) = \begin{cases}
			u_L,& x<0,\\
			\left(R_f(u^*,u_R)\right)(x,t),& x>0,
		\end{cases}
	\end{equation*}
	where $R_f(u^*,u_R)$ denotes the standard solution of the Riemann problem
	\begin{gather}
	\begin{aligned}
	  u_t + f(u)_x = 0,& &&(x,t)\in\R\times(0,T),\\
	  u(x,0)=\begin{cases}
	  	u^*,& x<0,\\
	  	u_R,& x>0,
	  \end{cases}& &&x\in\R.
	\end{aligned}
	\label{eqn: standard Riemann problem}
	\end{gather}
\end{proposition}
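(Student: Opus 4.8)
The plan is to verify that the proposed formula is an entropy solution by exploiting the decomposition of~\eqref{eqn: Riemann problem} into an initial value problem on $\R^-$ and an initial--boundary value problem on $\R^+$, exactly as in~\eqref{conservation law on D0}--\eqref{conservation law on Di}. Concretely, I would solve the problem separately on the two half-lines, check that the two pieces are entropy solutions in the sense of \Cref{def: entropy solution on R-} and \Cref{def: entropy solution on R+}, and then invoke the converse direction of the decomposition (recorded after~\eqref{conservation law on Di}) to conclude that their composite is the entropy solution of the full Riemann problem; uniqueness would then follow from the $\mathrm{L}^1$-contraction principle underlying \Cref{thm: existence and uniqueness of entropy solutions} (extended to the bounded $\BV$ Riemann datum by a standard finite-speed-of-propagation localization).

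For the left region, the flux is $g$ and the datum is the constant $u_L$. A constant satisfies the entropy inequality of \Cref{def: entropy solution on R-} with equality: integrating $\varphi_t$ in $t$ and $\varphi_x$ in $x$ makes all four terms cancel (the interface term $-\int_0^T |g(u_L)-g(c)|\varphi(0,t)\diff t$ being matched by the $\varphi_x$ contribution). Hence uniqueness forces $u\equiv u_L$ on $\R^-\times(0,T)$, so the left trace is $u(0-,t)=u_L$. The Rankine--Hugoniot coupling in the last line of~\eqref{conservation law on Di} therefore prescribes the boundary value $f^{-1}(g(u(0-,t)))=f^{-1}(g(u_L))=u^*$ for the right problem.

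For the right region I would show that the restriction of the full-line solution $R_f(u^*,u_R)$ to $\R^+$ solves the corresponding initial--boundary value problem. Since $R_f(u^*,u_R)$ is an entropy solution of $u_t+f(u)_x=0$ on all of $\R$, it obeys $\partial_t|u-c|+\partial_x|f(u)-f(c)|\le 0$ distributionally; pairing this with a nonnegative $\varphi$ supported on $[0,\infty)\times[0,T]$ and integrating by parts produces at $x=0$ \emph{precisely} the boundary term $+\int_0^T |f(u(0+,t))-f(c)|\varphi(0,t)\diff t$ with the sign demanded by \Cref{def: entropy solution on R+}, so the interior entropy inequality holds automatically for any trace. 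It then remains to identify the trace and check flux matching. Because $f_u\ge\alpha>0$, every wave in the self-similar profile $R_f(u^*,u_R)$ travels with speed $\ge\alpha>0$, so the left Riemann state fills the sector $\{0<x<\alpha t\}$ and the interface trace is $u(0+,t)=u^*$; consequently $f(u(0+,t))=f(u^*)=g(u_L)=g(u(0-,t))$, which is exactly the flux-matching condition. Letting $t\to0+$ with $x>0$ recovers $u_R$, matching the initial datum.

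I expect the genuine content of the argument to be this last interface analysis: one must simultaneously confirm that restricting the full-line Kru\v{z}kov inequality yields the boundary entropy term with the correct sign, and that strict positivity of all wave speeds makes the trace of the Riemann fan coincide with $u^*$ — so that no spurious boundary layer forms and the prescribed boundary datum is attained compatibly. The remaining ingredients (the constant left solution, the composition, and uniqueness) are then routine applications of the results already established.
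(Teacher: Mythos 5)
Your proof is correct, but it follows a genuinely different route from the paper. The paper's proof of this proposition is essentially a citation: it observes that the statement was proved in \cite[Prop. 1]{Piccoli/Tournus} for strictly increasing \emph{concave} fluxes and asserts that the argument ``can be easily modified'' to the general case in view of \cite[Thm 2.2]{holden2015front}. You instead give a self-contained verification built from the paper's own decomposition machinery: the constant $u_L$ satisfies \Cref{def: entropy solution on R-} with equality; restricting the whole-line Kru\v{z}kov inequality for $R_f(u^*,u_R)$ to $[0,\infty)$ (via a cutoff, using that the spatial trace exists) produces exactly the boundary term of \Cref{def: entropy solution on R+} with the correct $+$ sign; the bound $f_u\geq\alpha>0$ forces every wave of the Riemann fan to have speed at least $\alpha$, so the trace at $x=0+$ is $u^*$ and the flux-matching condition $f(u^*)=g(u_L)$ holds; and the converse decomposition property recorded after~\eqref{conservation law on Di} assembles the two pieces into an entropy solution in the sense of \Cref{def: entropy solution}, with uniqueness supplied by \Cref{thm: existence and uniqueness of entropy solutions}. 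Your interface analysis (strictly positive wave speeds preclude a boundary layer, so the prescribed boundary value is attained) is precisely the mechanism that makes the cited construction work, so the two arguments agree in substance; what your version buys is that it never uses concavity, thereby making explicit the ``easy modification'' the paper delegates to the reader, and it stays entirely inside the framework the paper has already set up. Two small points you handle correctly and which are worth keeping: the Riemann datum is not in $\mathrm{L}^1(\R)$, so \Cref{thm: existence and uniqueness of entropy solutions} as stated does not literally apply and one needs the finite-speed-of-propagation localization you mention (or the fact that the underlying Baiti--Jenssen theory accommodates $\mathrm{L}^\infty$ data); and the interior entropy inequality on $\R^+$ indeed holds for \emph{any} trace value, so the only genuine obligation at the interface is the flux-matching identity, exactly as you isolate it.
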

\begin{proof}
	The proof has been carried out in~\cite[Prop. 1]{Piccoli/Tournus} for the case of $f$ and $g$ (strictly increasing and) concave and can be easily modified to the general case in view of~\cite[Thm 2.2]{holden2015front}.
\end{proof}
The solution of the classical Riemann problem $R_f(u^*,u_R)$ consists of a finite sequence of rarefaction waves alternating with shocks determined by the number of inflection points of $f$. If $f$ is convex, the solution of the classical Riemann problem is given by a single shock if $u^*>u_R$ and by a single rarefaction wave if $u^*<u_R$ (see \Cref{fig: solution of the Riemann problem}).
%
%
%
%
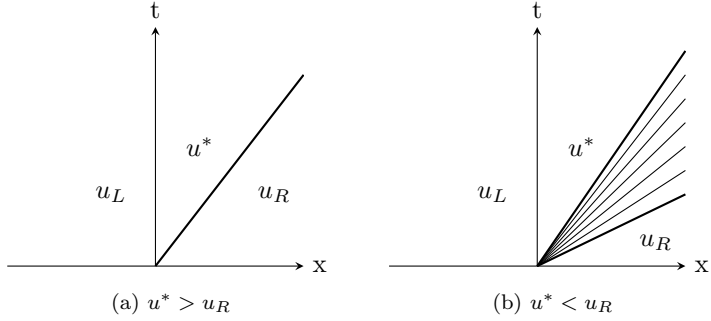
\begin{figure}[t]
\centering
\subfloat[$u^*>u_R$]{
\begin{tikzpicture}
  \begin{axis}[xmin=-1,xmax=1,ymax=1,axis lines=center,xtick={0},xlabel={x},ytick=\empty,ylabel={t},every axis x label/.style={
    at={(ticklabel* cs:1.)},
    anchor=west,
},every axis y label/.style={
    at={(ticklabel* cs:1.)},
    anchor=south,
}]
    \addplot[sharp plot,mark=none, thick,black] coordinates{
    (0,0)
    (1,0.8)
    };
    \node at (0.8,0.3) {$u_R$};
    \node at (-0.3,0.3) {$u_L$};
    \node at (0.3,0.5) {$u^*$};
  \end{axis}
\end{tikzpicture}
}
\hspace{1em}
\subfloat[$u^*<u_R$]{
\begin{tikzpicture}
  \begin{axis}[xmin=-1,xmax=1,ymax=1,axis lines=center,xtick={0},xlabel={x},ytick=\empty,ylabel={t},every axis x label/.style={
    at={(ticklabel* cs:1.)},
    anchor=west,
},every axis y label/.style={
    at={(ticklabel* cs:1.)},
    anchor=south,
}]
    \addplot[sharp plot,mark=none, thick,black] coordinates{
    (0,0)
    (1,0.3)
    };
    \addplot[sharp plot,mark=none,black] coordinates{
    (0,0)
    (1,0.4)
    };
    \addplot[sharp plot,mark=none,black] coordinates{
    (0,0)
    (1,0.5)
    };
    \addplot[sharp plot,mark=none,black] coordinates{
    (0,0)
    (1,0.6)
    };
    \addplot[sharp plot,mark=none,black] coordinates{
    (0,0)
    (1,0.7)
    };
    \addplot[sharp plot,mark=none,black] coordinates{
    (0,0)
    (1,0.8)
    };
    \addplot[sharp plot,mark=none, thick,black] coordinates{
    (0,0)
    (1,0.9)
    };
    \node at (0.8,0.1) {$u_R$};
    \node at (-0.3,0.3) {$u_L$};
    \node at (0.3,0.5) {$u^*$};
  \end{axis}
\end{tikzpicture}
}
\caption{The entropy solution of the Riemann problem~\eqref{eqn: Riemann problem} for $f$ convex consists of a stationary shock wave at the spatial discontinuity and a shock or a rarefaction wave.}
\label{fig: solution of the Riemann problem}
\end{figure}%
\begin{remark}\label{rem: Form of the Riemann problem solution for pcw. linear flux}
	If in~\eqref{eqn: Riemann problem} the flux $f$ is approximated by a piecewise linear flux $f_\delta$ then the solution of the Riemann problem~\eqref{eqn: standard Riemann problem} (with $f_\delta$ as the flux) $R_{f_\delta}(u^*,u_R)$ is an approximation to $R_f(u^*,u_R)$ in the sense that any shock in $R_f(u^*,u_R)$ is also present in $R_{f_\delta}(u^*,u_R)$ and any rarefaction wave in $R_f(u^*,u_R)$ is approximated by a so-called rarefaction fan consisting of a series of rarefaction shocks whose strength is bounded by the approximation error between $f$ and $f_\delta$ (cf. \cite[Lemma 3.1]{dafermos1972polygonal} or \cite[Cor. 2.4]{holden2015front}). Moreover, the piecewise constant solution takes values in the set of breakpoints of $f_\delta$ and $\{u_L,u^*,u_R\}$. See \Cref{fig: solution of the Riemann problem with piecewise linear flux} for an illustration in the case of a convex flux $f$.
\end{remark}%
\begin{figure}
\centering
\subfloat[$u^*>u_R$]{
\begin{tikzpicture}
  \begin{axis}[xmin=-1,xmax=1,ymax=1,axis lines=center,xtick={0},xlabel={x},ytick=\empty,ylabel={t},every axis x label/.style={
    at={(ticklabel* cs:1.)},
    anchor=west,
},every axis y label/.style={
    at={(ticklabel* cs:1.)},
    anchor=south,
}]
    \addplot[sharp plot,mark=none, thick,black] coordinates{
    (0,0)
    (1,0.8)
    };
    \node at (0.8,0.3) {$u_R$};
    \node at (-0.3,0.3) {$u_L$};
    \node at (0.3,0.5) {$u^*$};
  \end{axis}
\end{tikzpicture}
}
\hspace{1em}
\subfloat[$u^*<u_R$]{
\begin{tikzpicture}
  \begin{axis}[xmin=-1,xmax=1,ymax=1,axis lines=center,xtick={0},xlabel={x},ytick=\empty,ylabel={t},every axis x label/.style={
    at={(ticklabel* cs:1.)},
    anchor=west,
},every axis y label/.style={
    at={(ticklabel* cs:1.)},
    anchor=south,
}]
    \addplot[sharp plot,mark=none, thick,black] coordinates{
    (0,0)
    (1,0.3)
    };
    \addplot[sharp plot,mark=none,thick,black] coordinates{
    (0,0)
    (1,0.4)
    };
    \addplot[sharp plot,mark=none,thick,black] coordinates{
    (0,0)
    (1,0.6)
    };
    \addplot[sharp plot,mark=none,thick,black] coordinates{
    (0,0)
    (1,0.8)
    };
    \addplot[sharp plot,mark=none, thick,black] coordinates{
    (0,0)
    (1,0.9)
    };
    \node at (0.8,0.1) {$u_R$};
    \node at (-0.3,0.3) {$u_L$};
    \node at (0.3,0.5) {$u^*$};
  \end{axis}
\end{tikzpicture}
}
\caption{The entropy solution of the Riemann problem~\eqref{eqn: Riemann problem} for $f_\delta$ piecewise linear and convex consists of a stationary shock wave at the spatial discontinuity and a shock or a series of rarefaction shocks each of strength at most $\delta$.}
\label{fig: solution of the Riemann problem with piecewise linear flux}
\end{figure}
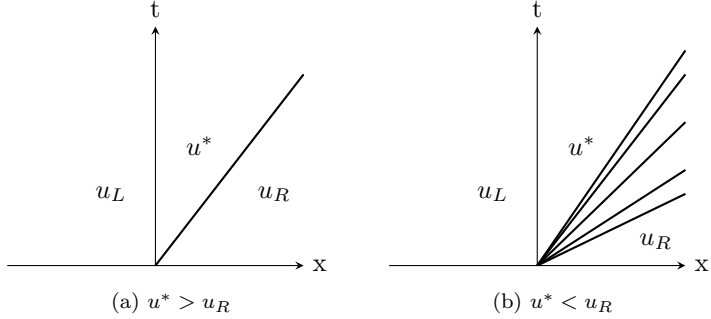%
\subsection{The front tracking method}
Let us now outline the front tracking method for~\eqref{conservation law} (see \cite{Piccoli/Tournus} or \cite{dafermos1972polygonal,holden2015front}). Given a piecewise constant function $k$ with finitely many discontinuities $\xi_i$, $i=1,\ldots,N$, we define -- as before -- $D_i=(\xi_i,\xi_{i+1})$, $i=0,\ldots,N$, where $\xi_0=-\infty$ and $\xi_{N+1}=\infty$. Further, we denote
\begin{equation*}
	f^{(i)}(\cdot) \coloneqq f(k(x),\cdot)\qquad \text{for }x\in D_i.
\end{equation*}

For the front tracking method, we approximate the initial datum $u_0$ by a piecewise constant function $u_0^\delta$ with finitely many discontinuities and we approximate the $u$-dependent fluxes $f^{(i)}$ by piecewise linear functions $f_\delta^{(i)}$ and define $f_\delta(k(x),\cdot)\coloneqq f_\delta^{(i)}(\cdot)$ for $x\in D_i$. Then we solve the surrogate conservation law
\begin{gather}
	\begin{aligned}
		u_t + f_\delta(k(x),u)_x =0,& &&(x,t)\in\R\times(0,T)\\
		u(x,0)=u_0^\delta(x),& &&x\in\R
	\end{aligned}
	\label{eqn: approximate conservation law}
\end{gather}
exactly. Since $u_0^\delta$ is piecewise constant, we have to solve a series of independent Riemann problems of the kind~\eqref{eqn: Riemann problem}. Since each flux $f_\delta^{(i)}$ is piecewise linear the solutions of those Riemann problems will consists of a series of shocks each traveling with constant speed (see \Cref{rem: Form of the Riemann problem solution for pcw. linear flux}). When two shock fronts meet at time $t=t^*$, we again have to solve the conservation law~\eqref{eqn: approximate conservation law} with initial condition $u(x,0)=u_\delta(x,t^*)$ where $u_\delta$ denotes the exact solution of~\eqref{eqn: approximate conservation law} up to time $t=t^*$. Note that $u_\delta(\cdot,t^*)$ is a piecewise constant function taking values in the set of breakpoints of $f_\delta$ and the set of values of $u_0^\delta$. In this way, we can determine the unique entropy solution of~\eqref{eqn: approximate conservation law} for all times~\cite{Piccoli/Tournus,holden2015front,dafermos1972polygonal}.

More specifically, we will approximate the flux functions $f^{(i)}$ by piecewise linear flux functions $f^{(i)}_\delta$ by way of interpolating between the points $(j\delta,f^{(i)}(j\delta))$, $j\in\Z$, i.e.,
\begin{equation*}
  f^{(i)}_\delta (u) = f^{(i)}(j\delta) + \frac{f^{(i)}((j+1)\delta) - f^{(i)}(j\delta)}{\delta}(u-j\delta),\qquad u\in (j\delta,(j+1)\delta].
\end{equation*}
Other piecewise constant approximations of $f^{(i)}$ are also possible as long as $\|f^{(i)}-f_\delta^{(i)}\|_{\mathrm{Lip}}\leq C\delta$. For example Baiti and Jenssen \cite{BAITI1997161} interpolate $f^{(i)}$ with uniform spacing in the $f$-values instead of the $u$-values.

The initial datum $u_0$ on the other hand, we approximate by a piecewise constant function $u_0^\delta$ with finitely many discontinuities satisfying
\begin{equation*}
	\TV(u_0^\delta) \leq \TV(u_0)\qquad\text{and}\qquad \|u_0-u_0^\delta\|_{\mathrm{L}^1(\R)} \leq C\delta.
\end{equation*}
As an example, we can take cell averages of $u_0$ over cells of size $\mathcal{O}(\delta)$.





\subsection{Convergence rate estimates for the front tracking method}
The convergence rate estimate for the front tracking method now follows immediately from the stability estimate in \Cref{thm: main stability estimate}.
\begin{theorem}[Convergence rate of the front tracking method]\label{thm: Convergence rate}
	Let $u$ be the entropy solution of~\eqref{conservation law} and $u_\delta$ the front tracking approximation, i.e. the entropy solution of~\eqref{eqn: approximate conservation law}. Then we have the following convergence rate estimate:
	\begin{equation*}
		\|u(\cdot,T) - u_\delta(\cdot,T)\|_{\mathrm{L}^1(\R)} \leq C \delta
	\end{equation*}
	for some constant $C$ independent of $\delta$.
\end{theorem}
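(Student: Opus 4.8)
The plan is to derive the rate as an immediate consequence of the flux-stability estimate of \Cref{thm: main stability estimate}. By construction the front tracking approximation $u_\delta$ is the \emph{exact} adapted entropy solution of the surrogate law~\eqref{eqn: approximate conservation law}, whose flux is $f_\delta(k(\cdot),\cdot)$ and whose initial datum is $u_0^\delta$, while the spatial coefficient $k$ is left untouched. I would therefore invoke the stability estimate with the roles $v=u_\delta$, $\tilde f = f_\delta$, $\tilde k = k$, and $v_0 = u_0^\delta$. Because $k=\tilde k$, the term $\|k-\tilde k\|_{\Linfty(\R)}$ drops out and only the pure flux-stability bound~\eqref{eqn: stability in the flux only} is required. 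This is the key observation: that bound rests solely on \Cref{thm: stability on R^-}, \Cref{thm: stability on R^+}, and \Cref{cor: stability on bounded domain}, each of which uses only strict monotonicity and Lipschitz continuity of the fluxes, and not the $\mathcal{C}^2$ regularity that \Cref{thm: main stability estimate} needs elsewhere to handle $k\neq\tilde k$ via $f_{uk}$. Hence the mere piecewise linearity of $f_\delta$ is no obstruction.

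Before applying~\eqref{eqn: stability in the flux only} I would verify that the surrogate problem lies within its scope. Each piecewise linear interpolant $f_\delta^{(i)}$ inherits strict monotonicity with the \emph{same} lower bound $\alpha$: every one of its slopes is a difference quotient of $f^{(i)}$ and therefore equals $(f^{(i)})'$ at an intermediate point by the mean value theorem, so it is $\geq\alpha$. Consequently $f_\delta$ is invertible on each $D_i$, the traces and Rankine--Hugoniot conditions required by \Cref{def: entropy solution} are well defined, and $u_\delta$ is a genuine adapted entropy solution to which \Cref{thm: existence and uniqueness of entropy solutions} applies. This last point is what makes the final constant independent of $\delta$: the $\Linfty$, spatial $\TV$, and temporal $\TV$ bounds for $u_\delta$ depend only on $\|u_0^\delta\|_{\Linfty(\R)}$ and $\TV(u_0^\delta)\leq\TV(u_0)$, both of which are controlled uniformly in $\delta$ by the construction of $u_0^\delta$.

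It then remains only to insert the two approximation bounds. By the choice of piecewise linear flux one has $\|f^{(i)}-f_\delta^{(i)}\|_{\mathrm{Lip}}\leq C\delta$ for every $i$, hence $\max_i\|f^{(i)}-f_\delta^{(i)}\|_{\mathrm{Lip}}\leq C\delta$; and by the choice of the approximate datum, $\|u_0-u_0^\delta\|_{\Lone(\R)}\leq C\delta$. Substituting both into~\eqref{eqn: stability in the flux only} gives
\[
  \|u(\cdot,T)-u_\delta(\cdot,T)\|_{\Lone(\R)}
  \leq C\bigl(\|u_0-u_0^\delta\|_{\Lone(\R)} + \max_i\|f^{(i)}-f_\delta^{(i)}\|_{\mathrm{Lip}}\bigr)
  \leq C\delta,
\]
which is the asserted first-order rate.

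Since all the analytic weight has already been carried by the stability estimate, there is no substantial obstacle remaining in this theorem itself; the only genuine points to check are the two hypotheses verified above, namely that strict monotonicity (and thus the entropy-solution structure) survives the piecewise linear interpolation, and that the unchanged coefficient $k$ lets us bypass the $\mathcal{C}^2$ assumption by using only the flux part of the stability bound. Once these are granted, the convergence rate follows at once.
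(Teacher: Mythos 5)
Your proposal is correct and follows essentially the same route as the paper: apply the flux-stability result to the surrogate problem~\eqref{eqn: approximate conservation law}, insert the approximation bounds $\|u_0-u_0^\delta\|_{\mathrm{L}^1(\R)}\leq C\delta$ and $\|f^{(i)}-f_\delta^{(i)}\|_{\mathrm{Lip}}\leq C\delta$, and note that the constant is independent of $\delta$ because $\TV(u_0^\delta)\leq\TV(u_0)$ and the Lipschitz constants of $f_\delta^{(i)}$ are controlled by those of $f^{(i)}$. If anything, you are slightly more careful than the paper itself, which invokes \Cref{thm: main stability estimate} verbatim even though the piecewise linear $f_\delta$ is not $\mathcal{C}^2$; your observation that, since $k=\tilde{k}$, only the flux-only estimate~\eqref{eqn: stability in the flux only} is needed -- which rests solely on strict monotonicity and Lipschitz continuity -- together with the mean-value-theorem argument showing $(f_\delta^{(i)})'\geq\alpha$, cleanly justifies this application.
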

\begin{proof}
	Since
	\begin{equation*}
		\|u_0-u_0^\delta\|_{\mathrm{L}^1(\R)} \leq C\delta\qquad\text{and} \qquad \|f^{(i)} - f_\delta^{(i)}\|_{\mathrm{Lip}} \leq \|(f^{(i)})'\|_{\infty} \delta
	\end{equation*}
	and the fact that the front tracking approximation $u_\delta$ is an entropy solution of the conservation law~\eqref{eqn: approximate conservation law} the convergence rate estimate follows immediately from \Cref{thm: main stability estimate}. Since $\TV(u_0^\delta)\leq \TV(u_0)$ and the Lipschitz constant of $f_\delta(k(x),\cdot)$ is bounded by the Lipschitz constant of $f(k(x),\cdot)$ the constant is independent of $\delta$.
\end{proof}

\section{Numerical experiments}\label{sec: numerical experiments}
We present two numerical experiments verifying our convergence rate analysis. We consider the `two flux' case
\begin{gather*}
  \begin{aligned}
    u_t + (H(x)f(u) + (1-H(x))g(u))_x =0,& &&(x,t)\in\R\times(0,T),\\
    u(x,0) = u_0(x),& &&x\in \R
  \end{aligned}
\end{gather*}
where $H$ is the Heaviside function. This corresponds to switching from one $u$-dependent flux, $g$, to another, $f$, across $x=0$.
In order to compare the $\mathrm{L}^1$-error of the front tracking method to that of the finite volume method introduced in~\cite{BadwaikRufconvergence}, we consider the same fluxes and initial data as in~\cite{BadwaikRufconvergence}.
\paragraph{Experiment 1}
In our first numerical experiment we take $g(u) = u$ and $f(u) = \unitfrac{u^2}{2}$ such that we switch from the transport equation to the Burgers equation. The initial datum we consider for Experiment~1 is
\begin{equation*}
  u_0(x) = \begin{cases}
    0.5, &\text{if }x<-0.5,\\
    2, &\text{if }x>-0.5
  \end{cases}
\end{equation*}
which is chosen in such a way that the Rankine--Hugoniot condition at the interface $x=0$ gives $u(0-,t)=u(0+,t)$ before the jump at $x=-0.5$ present in the initial datum interacts with the interface.
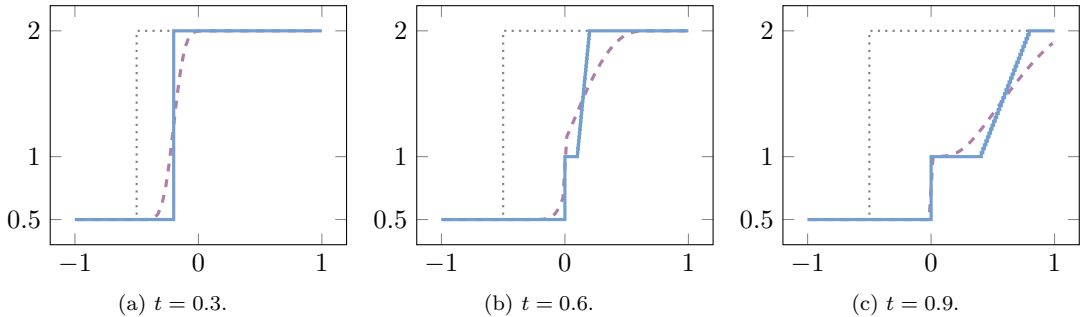
\begin{figure}
\centering
\subfloat[$t=0.3$.]{
\begin{tikzpicture}
  \begin{axis}[ymin=0.3,ymax=2.2,xtick={-1,0,1},xticklabels={$-1$,$0$,$1$},ytick={0.5,1,2},yticklabels={$0.5$,$1$,$2$}]
      \addplot[gray, dotted,thick,mark=none, const plot] table {RiemannID.txt};
      \addplot[plum1,very thick,mark=none, dashed] table {AdvectionToBurgersThird.txt};
    \addplot[skyblue1,very thick,mark=none, const plot] table {SolutionRiemannID_Transport_to_Burgers0.3.txt};
  \end{axis}
\end{tikzpicture}
}
\subfloat[$t=0.6$.]{
\begin{tikzpicture}
  \begin{axis}[ymin=0.3,ymax=2.2,xtick={-1,0,1},xticklabels={$-1$,$0$,$1$},ytick={0.5,1,2},yticklabels={$0.5$,$1$,$2$}]
    \addplot[gray, dotted,thick,mark=none, const plot] table {RiemannID.txt};
    \addplot[plum1,very thick,mark=none, dashed] table {AdvectionToBurgersTwoThird.txt};
    \addplot[skyblue1,very thick,mark=none, const plot] table {SolutionRiemannID_Transport_to_Burgers0.6.txt};
  \end{axis}
\end{tikzpicture}
}
\subfloat[$t=0.9$.]{
\begin{tikzpicture}
  \begin{axis}[ymin=0.3,ymax=2.2,xtick={-1,0,1},xticklabels={$-1$,$0$,$1$},ytick={0.5,1,2},yticklabels={$0.5$,$1$,$2$}]
    \addplot[gray, dotted,thick,mark=none, const plot] table {RiemannID.txt};
    \addplot[plum1,very thick,mark=none,dashed] table {AdvectionToBurgersEnd.txt};
    \addplot[skyblue1,very thick,mark=none, const plot] table {SolutionRiemannID_Transport_to_Burgers0.9.txt};
  \end{axis}
\end{tikzpicture}
}
\caption{Numerical solutions of Experiment $1$ with $\Dx=\unitfrac{2}{64}$ calculated with the front tracking method (straight line) and the finite volume method (dashed line, $\unitfrac{\Dt}{\Dx} = 0.5$) at various times.}
\label{fig: Numerical experiments 1}
\end{figure}
\Cref{fig: Numerical experiments 1} shows the numerical solution computed with the front tracking method with open boundaries in blue (straight line), a numerical solution calculated with the finite volume method~\cite{BadwaikRufconvergence} in purple (dashed line), and the initial datum in gray (dotted line) at various times (before, during and after interaction with the interface). Here, we used $\delta = \Dx = \unitfrac{2}{n}$ with $n=64$ and end time $T=0.9$. We clearly recognize the characteristic features of the transport equation and the Burgers equation here as the upward jump in the initial datum is transported to the right as a shock until it crosses the interface at $x=0$ where the shock, as it enters the Burgers regime, subsequently becomes a rarefaction wave.
\paragraph{Experiment 2}
In our second numerical experiment we choose $g(u)=\unitfrac{u^2}{2}$ and $f(u)=u$ meaning we switch from the Burgers equation to the transport equation across the interface at $x=0$. The initial datum for Experiment 2 is
\begin{equation*}
  u_0(x)=2+\exp(-100(x+0.75)^2).
\end{equation*}
Again, the offset of the initial datum is chosen such that the Rankine--Hugoniot condition at $x=0$ gives $u(0-,t)=u(0+,t)$ before the nonconstant part of $u_0$ interacts with the interface.
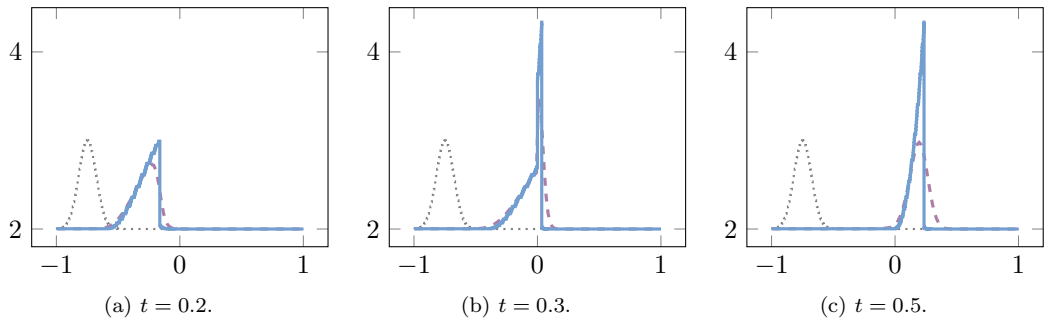
\begin{figure}
\centering
\subfloat[$t= 0.2$.]{
\begin{tikzpicture}
  \begin{axis}[ymin=1.8,ymax=4.5,xtick={-1,0,1},xticklabels={$-1$,$0$,$1$},ytick={2,4},yticklabels={$\phantom{0.}2$,$4$}]
    \addplot[gray, dotted, thick,mark=none, const plot] table {BumpID.txt};
    \addplot[plum1,very thick,mark=none,dashed] table {BurgersToAdvectionHalf.txt};
    \addplot[skyblue1,very thick,mark=none, const plot] table {SolutionBumpID_Burgers_to_Transport0.2.txt};
  \end{axis}
\end{tikzpicture}
}
\subfloat[$t=0.3$.]{
\begin{tikzpicture}
  \begin{axis}[ymin=1.8,ymax=4.5,xtick={-1,0,1},xticklabels={$-1$,$0$,$1$},ytick={2,4},yticklabels={$\phantom{0.}2$,$4$}]
    \addplot[gray, dotted,thick,mark=none, const plot] table {BumpID.txt};
    \addplot[plum1,very thick,mark=none,dashed] table {BurgersToAdvectionTwoThird.txt};
    \addplot[skyblue1,very thick,mark=none, const plot] table {SolutionBumpID_Burgers_to_Transport0.3.txt};
  \end{axis}
\end{tikzpicture}
}
\subfloat[$t=0.5$.]{
\begin{tikzpicture}
  \begin{axis}[ymin=1.8,ymax=4.5,xtick={-1,0,1},xticklabels={$-1$,$0$,$1$},ytick={2,4},yticklabels={$\phantom{0.}2$,$4$}]
    \addplot[gray, dotted,thick,mark=none, const plot] table {BumpID.txt};
    \addplot[plum1,very thick,mark=none, dashed] table {BurgersToAdvectionEnd.txt};
    \addplot[skyblue1,very thick,mark=none, const plot] table {SolutionBumpID_Burgers_to_Transport0.5.txt};
  \end{axis}
\end{tikzpicture}
}
\caption{Numerical solutions of Experiment $2$ with $\Dx=\unitfrac{2}{128}$ calculated with the front tracking method (straight line) and the finite volume method (dashed line, $\unitfrac{\Dt}{\Dx} = 0.2$) at various times.}
\label{fig: Numerical experiments 2}
\end{figure}
\Cref{fig: Numerical experiments 2} shows the numerical solution computed with the front tracking method with open boundaries in blue (straight line), a numerical solution calculated with the finite volume method~\cite{BadwaikRufconvergence} in purple (dashed line) and the initial datum in gray (dotted line) at various times (immediately before, during, and after interaction with the interface). Here, we used $\delta =\Dx=\unitfrac{2}{n}$ with $n=128$ and end time $T=0.5$. We clearly recognize the shock formation due to the Burgers regime to the left of the interface (see \Cref{fig: Numerical experiments 2}(a)) and that the shock is transported across the interface to the right with a different profile due to the Rankine--Hugoniot condition (see \Cref{fig: Numerical experiments 2} (c)).

\begin{table}
  \centering
  \subfloat[Experiment $1$.]{
  \begin{tabular}{rcccc}
  \toprule
  \multicolumn{1}{c}{$n$} & $\Lone$ error & $\Lone$ OOC\\
  \midrule
  $ 16$ &  $\num{6.250e-03}$ &  --  \\
  $ 32$ &  $\num{3.125e-03}$ & $1.00$ \\
  $ 64$ &  $\num{1.562e-03}$ & $1.00$ \\
  $128$ &  $\num{7.813e-04}$ & $1.00$ \\
  $256$ &  $\num{3.906e-04}$ & $1.00$ \\
  $512$ &  $\num{1.953e-04}$ & $1.00$ \\
  $1024$&  $\num{9.766e-05}$ & $1.00$ \\
  \bottomrule
\end{tabular}
  }
  \hspace{2em}
  \subfloat[Experiment $2$.]{
  \begin{tabular}{rcccc}
  \toprule
  \multicolumn{1}{c}{$n$} & $\Lone$ error & $\Lone$ OOC\\
  \midrule
  $ 16$ &  $\num{2.599e-02}$ & -- \\
  $ 32$ &  $\num{1.012e-02}$ & $1.36$ \\
  $ 64$ &  $\num{4.845e-03}$ & $1.06$ \\
  $128$ &  $\num{2.344e-03}$ & $1.05$ \\
  $256$ &  $\num{1.151e-03}$ & $1.03$ \\
  $512$ &  $\num{5.820e-04}$ & $0.98$ \\
  $1024$&  $\num{3.036e-04}$ & $0.94$ \\
  \bottomrule
\end{tabular}
  }
  \caption{$\mathrm{L}^1$ error and observed order of convergence for Experiments $1$ and $2$.}
  \label{tab: Convergence rates}
\end{table}
\Cref{tab: Convergence rates} shows the $\mathrm{L}^1$ errors and observed convergence rates of the front tracking method at time $T=0.9$ for Experiment 1 and at time $T=0.5$ for Experiment 2 for various values of $n$ where $\delta=\Dx=\frac{2}{n}$. As a reference solution, we used a numerical solution on a very fine grid ($n=2048$) in both cases. Both experiments clearly show the first-order convergence rate proved in \Cref{thm: Convergence rate}. Comparing \Cref{tab: Convergence rates} to the corresponding table for the finite volume method~\cite[Tab. 1]{BadwaikRufconvergence}, we see that the $\mathrm{L}^1$ error of the front tracking method seems to be lower by a factor of $100$ for Experiment~1 and by a factor of $10$ for Experiment~2 than the corresponding $\mathrm{L}^1$ error for the finite volume method.

\section{Conclusion}\label{sec: conclusion}

In this paper, we have studied conservation laws with discontinuous flux where we switch from one strictly monotone flux to another across finitely many points in the spatial domain. We extended existing $\mathrm{L}^1$-contractivity results of adapted entropy solutions by showing $\mathrm{L}^1$-Lipschitz-stability with respect to changes not only in the initial datum, but also in the flux $f$ and the discontinuous coefficient $k$. From there, we proved a first-order convergence rate for the front tracking method
which is widely used in the field of conservation laws with discontinuous flux.
We presented numerical experiments substantiating our convergence rate result.
Comparison with finite volume methods indicates better performance of the front tracking method.




\section*{Acknowledgments}
The author wishes to thank Susanne Solem and Espen Sande for their careful reading of the manuscript.


\begin{thebibliography}{10}

\bibitem{klausen1999stability}
{\sc R.~Aae~Klausen and N.~H. Risebro}, {\em Stability of conservation laws
  with discontinuous coefficients}, Journal of Differential Equations, 157
  (1999), pp.~41--60.

\bibitem{ADIMURTHI2007310}
{\sc Adimurthi, S.~Mishra, and G.~V. Gowda}, {\em Conservation law with the
  flux function discontinuous in the space variable---{II}: Convex--concave
  type fluxes and generalized entropy solutions}, Journal of Computational and
  Applied Mathematics, 203 (2007), pp.~310 -- 344.

\bibitem{adimurthi2005optimal}
{\sc Adimurthi, S.~Misra, and G.~V. Gowda}, {\em Optimal entropy solutions for
  conservation laws with discontinuous flux-functions}, Journal of Hyperbolic
  Differential Equations, 2 (2005), pp.~783--837.

\bibitem{andreianov2011theory}
{\sc B.~Andreianov, K.~H. Karlsen, and N.~H. Risebro}, {\em A theory of
  ${L}^1$-dissipative solvers for scalar conservation laws with discontinuous
  flux}, Archive for Rational Mechanics and Analysis, 201 (2011), pp.~27--86.

\bibitem{audusse2005uniqueness}
{\sc E.~Audusse and B.~Perthame}, {\em Uniqueness for scalar conservation laws
  with discontinuous flux via adapted entropies}, Proceedings of the Royal
  Society of Edinburgh Section A: Mathematics, 135 (2005), pp.~253--265.

\bibitem{badwaik2019multilevel}
{\sc J.~Badwaik, C.~Klingenberg, N.~H. Risebro, and A.~M. Ruf}, {\em Multilevel
  {M}onte {C}arlo finite volume methods for random conservation laws with
  discontinuous flux}, arXiv:1906.08991,  (2020).

\bibitem{BadwaikRufconvergence}
{\sc J.~Badwaik and A.~M. Ruf}, {\em Convergence rates of monotone schemes for
  conservation laws with discontinuous flux}, SIAM Journal on Numerical
  Analysis, 58 (2020), pp.~607--629.

\bibitem{BAITI1997161}
{\sc P.~Baiti and H.~K. Jenssen}, {\em Well-posedness for a class of $2\times
  2$ conservation laws with ${L}^\infty$ data}, Journal of Differential
  Equations, 140 (1997), pp.~161 -- 185.

\bibitem{burger2003front}
{\sc R.~B{\"u}rger, K.~Karlsen, C.~Klingenberg, and N.~Risebro}, {\em A front
  tracking approach to a model of continuous sedimentation in ideal
  clarifier--thickener units}, Nonlinear Analysis: Real World Applications, 4
  (2003), pp.~457--481.

\bibitem{burger2009engquist}
{\sc R.~B{\"u}rger, K.~H. Karlsen, and J.~D. Towers}, {\em An
  {E}ngquist--{O}sher-type scheme for conservation laws with discontinuous flux
  adapted to flux connections}, SIAM Journal on Numerical Analysis, 47 (2009),
  pp.~1684--1712.

\bibitem{coclite2005conservation}
{\sc G.~M. Coclite and N.~H. Risebro}, {\em Conservation laws with time
  dependent discontinuous coefficients}, SIAM Journal on Mathematical Analysis,
  36 (2005), pp.~1293--1309.

\bibitem{dafermos1972polygonal}
{\sc C.~M. Dafermos}, {\em Polygonal approximations of solutions of the initial
  value problem for a conservation law}, Journal of Mathematical Analysis and
  Applications, 38 (1972), pp.~33--41.

\bibitem{diehl1996conservation}
{\sc S.~Diehl}, {\em A conservation law with point source and discontinuous
  flux function modelling continuous sedimentation}, SIAM Journal on Applied
  Mathematics, 56 (1996), pp.~388--419.

\bibitem{ghoshal2020convergence}
{\sc S.~S. Ghoshal, A.~Jana, and J.~D. Towers}, {\em Convergence of a {G}odunov
  scheme to an {A}udusse-{P}erthame adapted entropy solution for conservation
  laws with {BV} spatial flux}, arXiv preprint arXiv:2003.10321,  (2020).

\bibitem{gimse1993conservation}
{\sc T.~Gimse}, {\em Conservation laws with discontinuous flux functions}, SIAM
  Journal on Mathematical Analysis, 24 (1993), pp.~279--289.

\bibitem{gimse1991riemann}
{\sc T.~Gimse and N.~H. Risebro}, {\em Riemann problems with a discontinuous
  flux function}, in Proceedings of Third International Conference on
  Hyperbolic Problems, vol.~1, 1991, pp.~488--502.

\bibitem{gimse1992solution}
\leavevmode\vrule height 2pt depth -1.6pt width 23pt, {\em Solution of the
  {C}auchy problem for a conservation law with a discontinuous flux function},
  SIAM Journal on Mathematical Analysis, 23 (1992), pp.~635--648.

\bibitem{holden2015front}
{\sc H.~Holden and N.~H. Risebro}, {\em Front tracking for hyperbolic
  conservation laws}, vol.~152, Springer, 2015.

\bibitem{karlsen2002upwind}
{\sc K.~Karlsen, N.~Risebro, and J.~Towers}, {\em Upwind difference
  approximations for degenerate parabolic convection--diffusion equations with
  a discontinuous coefficient}, IMA Journal of Numerical Analysis, 22 (2002),
  pp.~623--664.

\bibitem{karlsen2004convergence}
{\sc K.~H. Karlsen and J.~D. Towers}, {\em Convergence of the
  {L}ax-{F}riedrichs scheme and stability for conservation laws with a
  discontinuous space-time dependent flux}, Chinese Annals of Mathematics, 25
  (2004), pp.~287--318.

\bibitem{karlsen2017convergence}
{\sc K.~H. Karlsen and J.~D. Towers}, {\em Convergence of a {G}odunov scheme
  for conservation laws with a discontinuous flux lacking the crossing
  condition}, Journal of Hyperbolic Differential Equations, 14 (2017),
  pp.~671--701.

\bibitem{karslen2003l1}
{\sc K.~Karslen, N.~Risebro, and J.~Towers}, {\em L1-stability for entropy
  solutions of nonlinear degenerate parabolic connection-diffusion equations
  with disc. coeff., skr. k}, Nor. Vid. Selsk,(3),  (2003), pp.~1--49.

\bibitem{klingenberg1995convex}
{\sc C.~Klingenberg and N.~H. Risebro}, {\em Convex conservation laws with
  discontinuous coefficients. {E}xistence, uniqueness and asymptotic behavior},
  Communications in Partial Differential Equations, 20 (1995), pp.~1959--1990.

\bibitem{klingenberg2001stability}
\leavevmode\vrule height 2pt depth -1.6pt width 23pt, {\em Stability of a
  resonant system of conservation laws modeling polymer flow with gravitation},
  Journal of Differential Equations, 170 (2001), pp.~344--380.

\bibitem{kruvzkov1970first}
{\sc S.~N. Kru{\v{z}}kov}, {\em First order quasilinear equations in several
  independent variables}, Mathematics of the USSR-Sbornik, 10 (1970),
  pp.~217--243.

\bibitem{kuznetsov76}
{\sc N.~Kuznetsov}, {\em Accuracy of some approximate methods for computing the
  weak solutions of a first-order quasi-linear equation}, USSR Computational
  Mathematics and Mathematical Physics, 16 (1976), pp.~105--119.

\bibitem{lighthill1955kinematic}
{\sc M.~J. Lighthill and G.~B. Whitham}, {\em On kinematic waves {II}. {A}
  theory of traffic flow on long crowded roads}, Proceedings of the Royal
  Society of London. Series A. Mathematical and Physical Sciences, 229 (1955),
  pp.~317--345.

\bibitem{lucier1986moving}
{\sc B.~J. Lucier}, {\em A moving mesh numerical method for hyperbolic
  conservation laws}, Mathematics of Computation, 46 (1986), pp.~59--69.

\bibitem{Sid2005}
{\sc S.~Mishra}, {\em Convergence of upwind finite difference schemes for a
  scalar conservation law with indefinite discontinuities in the flux
  function}, SIAM Journal on Numerical Analysis, 43 (2005), pp.~559--577.

\bibitem{MishraBayesian}
{\sc S.~Mishra, D.~Ochsner, A.~M. Ruf, and F.~Weber}, {\em Bayesian inverse
  problems for scalar conservation laws}.
\newblock in preparation, 2020.

\bibitem{nessyconv92}
{\sc H.~Nessyahu and E.~Tadmor}, {\em The convergence rate of approximate
  solutions for nonlinear scalar conservation laws}, SIAM Journal on Numerical
  Analysis, 29 (1992), pp.~1505--1519.

\bibitem{nessyconv}
{\sc H.~Nessyahu, E.~Tadmor, and T.~Tassa}, {\em The convergence rate of
  {G}odunov type schemes}, SIAM Journal on Numerical Analysis, 31 (1994),
  pp.~1--16.

\bibitem{panov2010existence}
{\sc E.~Y. Panov}, {\em Existence and strong pre-compactness properties for
  entropy solutions of a first-order quasilinear equation with discontinuous
  flux}, Archive for Rational Mechanics and Analysis, 195 (2010), pp.~643--673.

\bibitem{Piccoli/Tournus}
{\sc B.~Piccoli and M.~Tournus}, {\em A general {BV} existence result for
  conservation laws with spatial heterogeneities}, SIAM Journal on Mathematical
  Analysis, 50 (2018), pp.~2901--2927.

\bibitem{Ridder2019}
{\sc J.~Ridder and A.~M. Ruf}, {\em A convergent finite difference scheme for
  the {O}strovsky--{H}unter equation with {D}irichlet boundary conditions}, BIT
  Numerical Mathematics, 59 (2019), pp.~775--796.

\bibitem{risebro1991front}
{\sc N.~H. Risebro and A.~Tveito}, {\em Front tracking applied to a nonstrictly
  hyperbolic system of conservation laws}, SIAM Journal on Scientific and
  Statistical Computing, 12 (1991), pp.~1401--1419.

\bibitem{Ruf2019}
{\sc A.~M. Ruf, E.~Sande, and S.~Solem}, {\em The optimal convergence rate of
  monotone schemes for conservation laws in the {W}asserstein distance},
  Journal of Scientific Computing, 80 (2019), pp.~1764--1776.

\bibitem{sabac}
{\sc F.~{\c{S}}abac}, {\em The optimal convergence rate of monotone finite
  difference methods for hyperbolic conservation laws}, SIAM Journal on
  Numerical Analysis, 34 (1997), pp.~2306--2318.

\bibitem{Shen:2017aa}
{\sc W.~Shen}, {\em On the uniqueness of vanishing viscosity solutions for
  {R}iemann problems for polymer flooding}, Nonlinear Differential Equations
  and Applications NoDEA, 24 (2017), p.~37.

\bibitem{solem2018convergence}
{\sc S.~Solem}, {\em Convergence rates of the front tracking method for
  conservation laws in the {W}asserstein distances}, SIAM Journal on Numerical
  Analysis, 56 (2018), pp.~3648--3666.

\bibitem{Towers1}
{\sc J.~Towers}, {\em Convergence of a difference scheme for conservation laws
  with a discontinuous flux}, SIAM Journal on Numerical Analysis, 38 (2000),
  pp.~681--698.

\bibitem{Towers2}
\leavevmode\vrule height 2pt depth -1.6pt width 23pt, {\em A difference scheme
  for conservation laws with a discontinuous flux: The nonconvex case}, SIAM
  Journal on Numerical Analysis, 39 (2001), pp.~1197--1218.

\bibitem{TOWERS20205754}
{\sc J.~D. Towers}, {\em An existence result for conservation laws having {BV}
  spatial flux heterogeneities - without concavity}, Journal of Differential
  Equations, 269 (2020), pp.~5754 -- 5764.

\bibitem{wen2008convergence}
{\sc X.~Wen and S.~Jin}, {\em Convergence of an immersed interface upwind
  scheme for linear advection equations with piecewise constant coefficients
  {I}: L$^1$-error estimates}, Journal of Computational Mathematics, 26 (2008),
  pp.~1--22.

\end{thebibliography}
\bibliographystyle{siam}

\end{document}